\newif\ifarxiv
\let\ps@plain\relax
\def\ps@plain{\def\@oddhead{}}
\let\div\relax
\DeclareMathOperator{\div}{div}
\DeclareMathOperator{\grad}{grad}
\newcommand{\mesh}{{\cal T}}
\newcommand{\tbnorm}[1]{\vert\kern-0.1em\vert\kern-0.1em\vert\, #1 \,\vert\kern-0.1em\vert\kern-0.1em\vert}
\newcommand{\qbnorm}[1]{\vert\kern-0.15em\vert\kern-0.15em\vert\kern-0.15em\vert\, #1 \,\vert\kern-0.15em\vert\kern-0.15em\vert\kern-0.15em\vert}
\definecolor{blue}{rgb}{0.2980392156862745, 0.4470588235294118, 0.6901960784313725}
\definecolor{green}{rgb}{0.3333333333333333, 0.6588235294117647, 0.40784313725490196}
\definecolor{red}{rgb}{0.7686274509803922, 0.3058823529411765, 0.3215686274509804}
\newcommand{\hg}{\ensuremath{{h,\gamma}}}
\newcommand{\Hg}{\ensuremath{{H,\gamma}}}
\newcommand{\Ptwo}{\ensuremath{\mathbb{P}_2}\xspace}
\newcommand{\Pone}{\ensuremath{\mathbb{P}_1}\xspace}
\newcommand{\PkdPminusdisc}{\ensuremath{[\mathbb{P}_k]^d\mathrm{-}\mathbb{P}_{k-1}^\text{disc}}\xspace}
\newcommand{\Pthree}{\ensuremath{\mathbb{P}_3}\xspace}
\newcommand{\PtwoPzero}{\ensuremath{[\mathbb{P}_2]^2\mathrm{-}\mathbb{P}_0}\xspace}
\DeclareMathOperator{\MacroStar}{\ensuremath{\mathrm{macro}\,\mathrm{star}}}
\newcommand{\re}{\mathbb{R}}
\newcommand{\sgrad}[1]{\ensuremath{\mathrm{E}{#1}}}
\newcommand{\Nc}{\mathcal{N}}
\newcommand{\fb}{\mathbf{f}}
\newcommand{\hb}{\mathbf{h}}
\newcommand{\nb}{\mathbf{n}}
\newcommand{\ub}{\mathbf{u}}
\newcommand{\vb}{\mathbf{v}}
\newcommand{\zerob}{\mathbf{0}}
\newcommand{\Ib}{\mathbf{I}}
\let\temp\phi
\let\phi\varphi
\let\varphi\temp
\newcommand{\Phib}{\mathbf{\Phi}}
\newcommand{\phib}{\boldsymbol{\phi}}
\newcommand{\vertiii}[1]{{\left\vert\kern-0.25ex\left\vert\kern-0.25ex\left\vert #1 
    \right\vert\kern-0.25ex\right\vert\kern-0.25ex\right\vert}}
\def\Proj_#1{\ensuremath{\operatorname{Proj}_{#1}\!}}
\newcommand{\dr}[1]{\ensuremath{\operatorname{d}\!{#1}}}
\def\drs_#1{\ensuremath{\operatorname{d}_{#1}\!}}
\def\Drs_#1{\ensuremath{\operatorname{D}_{#1}\!}}
\def\Drs^#1{\ensuremath{\operatorname{D}^{#1}\!}}
\def\Ers_#1{\mathrm{E}_{#1}}
\DeclareMathOperator{\supp}{supp}
\let\div\relax
\DeclareMathOperator{\div}{div}
\DeclareMathOperator{\curl}{curl}
\newcommand{\defeq}{\vcentcolon=}
\def\xunderbrace#1_#2{{\underbrace{#1}_{\mathclap{#2}}}}
\def\xoverbrace#1^#2{{\overbrace{#1}^{\mathclap{#2}}}}
\def\xunderarrow#1_#2{{\underset{\overset{\uparrow}{\mathclap{#2}}}{#1}}}
\def\xoverarrow#1^#2{{\overset{\underset{\downarrow}{\mathclap{#2}}}{#1}}}
\DeclareDocumentCommand{\boxedeq}{m o}{%
    \IfNoValueTF{#2}{%
        \rlap{\boxed{#1}}%
        \phantom{\hskip\fboxrule\hskip\fboxsep #1}%
        }{%
        \rlap{\boxed{#1#2}}%
        \phantom{\hskip\fboxrule\hskip\fboxsep #1}&\phantom{#2}%
    }%
}
\NewDocumentCommand\meq{m+g}{%
\IfNoValueTF{#2}
  {\overset{\mathsmaller{#1}}{=}}
  {\overset{\mathsmaller{#1}}{\underset{\mathsmaller{#2}}=}}%
}
\def\nvphantom{\v@true\h@false\nph@nt}
\def\nhphantom{\v@false\h@true\nph@nt}
\def\nphantom{\v@true\h@true\nph@nt}
\def\nph@nt{\ifmmode\def\next{\mathpalette\nmathph@nt}%
\else\let\next\nmakeph@nt\fi\next}
\def\nmakeph@nt#1{\setbox\z@\hbox{#1}\nfinph@nt}
\def\nmathph@nt#1#2{\setbox\z@\hbox{$\m@th#1{#2}$}\nfinph@nt}
\def\nfinph@nt{\setbox\tw@\null
    \ifv@ \ht\tw@\ht\z@ \dp\tw@\dp\z@\fi
\ifh@ \wd\tw@-\wd\z@\fi \box\tw@}
\def\XXint#1#2#3{{\setbox0=\hbox{$#1{#2#3}{\int}$ }
\vcenter{\hbox{$#2#3$ }}\kern-.6\wd0}}
\newcommand{\pushright}[1]{\ifmeasuring@#1\else\omit\hfill$\displaystyle#1$\fi\ignorespaces}
\newcommand{\pushleft}[1]{\ifmeasuring@#1\else\omit$\displaystyle#1$\hfill\fi\ignorespaces}
\newcommand{\review}[1]{#1}
\definecolor{darkblue}{rgb}{0.00,0.00,0.55}
\definecolor{black}{rgb}{0.00,0.00,0.00}
\begin{document}

\title{Robust multigrid methods for nearly incompressible elasticity using macro elements}
\shorttitle{Robust multigrid via Fortin operators}

\author{%
{\sc
Patrick~E.~Farrell\,\thanks{Email: patrick.farrell@maths.ox.ac.uk}}\\[2pt]
Mathematical Institute, University of Oxford, Oxford, UK\\[6pt]
{\sc and}\\[6pt]
{\sc
Lawrence Mitchell\,\thanks{Email: lawrence.mitchell@durham.ac.uk}}\\[2pt]
Department of Computer Science, Durham University, Durham, UK\\[6pt]
{\sc and}\\[6pt]
{\sc
L.~Ridgway Scott\,\thanks{Email: ridg@uchicago.edu}}\\[2pt]
Department of Computer Science, University of Chicago, Chicago, USA\\[6pt]
{\sc and}\\[6pt]
{\sc
Florian~Wechsung\,\thanks{Corresponding author. Email: wechsung@nyu.edu}}\\[2pt]
Courant Institute of Mathematical Sciences, New York University, New York, USA
}

\shortauthorlist{P.~E.~Farrell, L.~Mitchell, L.~R.~Scott, and F.~Wechsung}

\maketitle

\begin{abstract}
{%
We present a mesh-independent and parameter-robust multigrid solver for
the Scott--Vogelius discretisation of the nearly incompressible linear
elasticity equations on meshes with a macro element structure. The
discretisation achieves exact representation of the limiting divergence
constraint at moderate polynomial degree. Both the relaxation and
multigrid transfer operators exploit the macro structure for robustness
and efficiency. For the relaxation, we use the existence of local Fortin
operators on each macro cell to construct a local space decomposition
with parameter-robust convergence. For the transfer, we construct a robust
prolongation operator by performing small local
solves over each coarse macro cell. The necessity of both components
of the algorithm is confirmed by numerical experiments.
}{linear elasticity; multigrid; preconditioning; macro elements; parameter-robustness}
\end{abstract}

\section{Introduction}

We consider the linear elasticity equations on a simply connected domain $\Omega\subset\re^d$ with boundary $\partial\Omega = \Gamma_N\cup\Gamma_D$, given by 
\begin{equation} \label{eqn:elasticity}
    \begin{aligned}
        -\nabla\cdot( \sgrad\ub + \gamma (\nabla \cdot \ub) \Ib_d  ) &= \fb && \text{ in } \Omega,\\
            \ub &= \zerob && \text{ on } \Gamma_D,\\
            (\sgrad \ub + \gamma (\nabla\cdot \ub) \Ib_d)\nb &= \hb && \text{ on } \Gamma_N,
    \end{aligned}
\end{equation}
where $\sgrad \ub = \frac12(\nabla \ub + \nabla \ub^\top)$, $\gamma \ge 0$,  and $\fb$ and $\hb$ are given data.
Here $\gamma={\lambda}/{2\mu}$ where $\mu$ and $\lambda$ are the Lam\'e parameters describing an isotropic, homogeneous material.
As $\gamma\to\infty$ this corresponds to the nearly incompressible case and the equations become difficult to solve~\citep{schoberl1999b,lee2009, dohrmann_overlapping_2009}.

In weak form the elasticity equations can be expressed as: given $\fb \in H^{-1}(\Omega;\re^d)$ and $\hb\in H^{-1/2}(\Gamma_N;\re^d)$, find $\ub\in V := \{ \vb \in H^1(\Omega;\re^d) : \vb\vert_{\Gamma_D}=\zerob\}$ such that
\begin{equation}
    (\sgrad{\ub}, \sgrad \vb) + \gamma (\nabla\cdot \ub, \nabla\cdot \vb) = \langle \fb, \vb\rangle  + \langle \hb, \vb\rangle 
\end{equation}
for all $\vb \in V$.
Here $(\cdot, \cdot)$ denotes the standard $L^2$ inner product and $\langle\cdot, \cdot \rangle$ refers to the dual pairing.
The numerical solution of these equations in the nearly incompressible limit using the finite element method has attracted significant attention.
Choosing finite dimensional subspaces $V_h\subset V$ and $Q_h\subseteq \div(V_h) \subset L^2(\Omega)$, we consider the problem: find $\ub\in V_h$ such that
\begin{equation}
    a_\hg(\ub, \vb) = \langle \fb, \vb\rangle  + \langle \hb,
    \vb\rangle \qquad \text{ for all } \vb \in V_h.
\end{equation}
Here $a_\hg$ is defined as
\begin{equation}\label{eqn:paramdependent-bil-form}
    a_\hg(\ub, \vb) \defeq a(\ub, \vb) + \gamma c_h(\ub, \vb),
\end{equation}
and 
\begin{equation}
    \begin{aligned}
        a(\ub, \vb) &\defeq (\sgrad{\ub}, \sgrad \vb),\\
        c_h(\ub, \vb)&\defeq (\Pi_{Q_h}(\nabla\cdot \ub), \Pi_{Q_h}(\nabla\cdot \vb)),
    \end{aligned}
\end{equation}
where $\Pi_{Q_h}$ is the projection onto $Q_h$.
Here $Q_h$ is the pressure space in an associated mixed formulation of the problem.
We note that these equations also arise in iterated penalty and augmented Lagrangian formulations of the Stokes and Navier--Stokes equations.
In this context it is highly desirable that the property $\div(H^1) = L^2$ is preserved for the discretised problem, that is $\div(V_h) = Q_h$~\citep{john2017}.
An example of a such a discretisation is the \PkdPminusdisc Scott--Vogelius discretisation of degree $k$.
In this work we restrict ourselves to discretisations preserving this exactness property.

As $\gamma$ is increased, two separate issues arise: first, locking may occur~\citep{ManIvoLockingElasticity}.
This can be avoided by considering discretisations $V_h\times Q_h$ that satisfy a discrete inf-sup condition.
The second issue is that the arising linear systems become poorly conditioned since the problem becomes nearly singular, due to the nullspace of the divergence operator.
The focus of this work is the development of a multigrid method with robust performance as $\gamma$ becomes large for the Scott--Vogelius discretisation.

Sch\"oberl~developed a robust multigrid scheme for nearly incompressible elasticity in his doctoral thesis \citep{schoberl1999,schoberl1999b}.
He proved that the key ingredients are a robust prolongation and a robust relaxation scheme and showed how to construct these for the $\PtwoPzero$ element in two dimensions.
His insight was that the prolongation needs to map divergence-free vector fields on the coarse grid to (nearly) divergence-free vector fields on the fine grid and that a robust relaxation can be built from a space decomposition that provides a stable local decomposition of the kernel of the divergence.
While Sch\"oberl considered additive relaxation, \citet{lee2007, lee2009} later developed a similar kernel decomposition condition for parameter-robust multiplicative relaxation.

The proof by Sch\"oberl for the kernel decomposition is based on an explicit construction of a Fortin operator that is used in the proof of inf-sup stability for the $\PtwoPzero$ element.
Lee et al.~consider the Scott--Vogelius element, but rely on the existence of a local basis for $C^1$ piecewise polynomials to construct this decomposition.
As the existence of such a space is only known for high-order polynomials, they have to consider at least $k \ge 4$ and are limited to two dimensions~\citep{morgan_nodal_1975}.
Also relying on a local basis for $C^1$ piecewise polynomials,~\citet{wu2014} extend the results of Lee et al.~to the additive case.
Lastly we mention that it is also possible to develop domain decomposition preconditioners for this problem, as shown by~\citet{dohrmann_overlapping_2009}.

\subsection{Contributions}
In this work we consider meshes with a macro element structure with the main requirement being that the inf-sup condition is satisfied on each macro element.
Our main contribution is the construction of a particular localised Fortin operator obtained by ``gluing'' together Fortin operators on each macro element, the existence of which is guaranteed by inf-sup stability on the macro elements.
Using this operator we are then able to construct robust relaxation and prolongation schemes for lower polynomial degrees $k$ than previously possible.
As examples we consider the Scott--Vogelius element on Alfeld and Powell--Sabin splits. 
We emphasize that in contrast to previous work considering the Scott--Vogelius element in this context, our approach does not require an explicit construction of a local basis of a $C^1$ space.
Finally, we note that the strategy developed here can be applied to find robust multigrid preconditioners for other elements.
These include the popular $[\mathbb{Q}_k]^d\mathrm{-}\mathbb{P}_{k-1}^\text{disc}$ or the $[\mathbb{Q}_k]^d\mathrm{-}\mathbb{Q}_{k-2}^\text{disc}$ elements~\citep{bernardi_spectral_1997,schwab_mixed_1999,matthies_inf-sup_2002,heuveline_inf-sup_2007} on quadrilateral and hexahedral meshes, since for these elements inf-sup stability is usually proven using the macro element approach.


\subsection{Structure}
The rest of this work is structured as follows.
In Section~\ref{sec:smoothing} we develop robust smoothers in the context of subspace correction methods.
In Section~\ref{sec:prolongation} we motivate the need for a special
prolongation operator and give a different proof of robustness for a modification of the prolongation introduced previously by Sch\"oberl.
In Section~\ref{sec:mgrid} we state a convergence theorem for multigrid W-cycles using these two ingredients.
Finally, in Section~\ref{sec:numerics} we report numerical examples in two and in three dimensions that clearly demonstrate that standard geometric and algebraic multigrid methods fail for this problem, and that both the robust smoothing and robust prolongation are necessary to obtain an effective solver in the high $\gamma$ limit.



\section{Smoothing}\label{sec:smoothing}

We define the operator $A_\hg : V_h^{\phantom{*}} \to V_h^*$ by 
\begin{equation}
    \langle A_\hg \ub , \vb\rangle \defeq a_\hg(\ub, \vb),
\end{equation}
and drop the subscript $\gamma$ to denote the case of $\gamma=0$, i.e.
\begin{equation}
    \langle A_h \ub , \vb\rangle \defeq a(\ub, \vb).
\end{equation}

Many smoothers commonly used in multigrid can be expressed as
subspace correction methods \citep{xu1992}.
We consider a decomposition
\begin{equation}
    V_h = \sum_i V_i
\end{equation}
where the sum is not necessarily direct.
For each subspace $i$ we denote the natural inclusion by $I_i:V_i \to V_h$
and we define the restriction $A_i$ of $A_\hg$ onto $V_i$ as 
\begin{equation}
    \langle A_i \ub_i, \vb_i\rangle \defeq \langle A_\hg I_i \ub_i, I_i \vb_i\rangle \qquad \text{ for all } \ub_i,\vb_i\in V_i.
\end{equation}
The additive Schwarz preconditioner associated with the space decomposition $\{V_i\}$ is then defined by the action of its inverse:
\begin{equation}
    D_\hg^{-1} := \sum_i I_i A_i^{-1} I_i^*.
\end{equation}
\review{In this work we focus on the case when exact solves are used on each subspace, but we note that the work of \citet{xu1992} also considers inexact solves, i.e.~$D_\hg^{-1} := \sum_i I_i R_i I_i^*$ with $R_i\approx A_i^{-1}$.}

The method is also known as the parallel subspace correction method~\citep{xu1992}.
We denote the norms induced by the operator and the preconditioner by
\begin{equation}
    \begin{aligned}
        \|\ub\|_{A_\hg}^2 &= \langle A_\hg\ub, \ub\rangle\\
        \|\ub\|_{D_\hg}^2 &= \langle D_\hg\ub, \ub\rangle
    \end{aligned}
\end{equation}
and recall a key result in the theory of subspace correction methods
(\citet[Lemma 1]{widlund1992}, see \citet[Theorem 4.1]{schoberl1999b}
or \citet[Eqn.~(4.11)]{xu_method_2001} for a proof):
\begin{equation}\label{eqn:ssc-norm-expression}
    \|\ub_h\|_{D_\hg}^{2}  = \inf_{\substack{\ub_i \in V_i\\\sum_i \ub_i = \ub_h}} \sum_i \|\ub_i\|^2_{A_i}.
\end{equation}
To prove spectral equivalence of $D_\hg$ and $A_\hg$, we want to obtain a bound of the form
\begin{equation}\label{eqn:spectral-bounds-general}
    c_1 D_\hg \le A_\hg \le c_2 D_\hg,
\end{equation}
where $M\le N$ means that $\|\ub\|_M \le \|\ub\|_N$ for all $\ub$.
A bound for the number of iterations required by the conjugate gradient method for $A_\hg$ preconditioned by $D_\hg$ then behaves like $\sqrt{c_2/c_1}$~\cite[eqn.~(2.18)]{Elman:2014aa}.

The second inequality in~\eqref{eqn:spectral-bounds-general} measures
the interaction between subspaces and can be bounded in a parameter-independent way by the maximum overlap of the subspaces $N_O$~(\citet[Lemma 4.6]{xu1992}, \citet[Lemma 3.2]{schoberl1999b}),
which is bounded on shape regular meshes.

The first inequality in~\eqref{eqn:spectral-bounds-general} is harder to obtain and usually depends not only on the smoother but also on the PDE and the mesh size.
%
%
We demonstrate this for the case of Jacobi relaxation, i.e.~when $V_i = \{\alpha\phib_i: \alpha\in \mathbb{R}\}$ where $\{\phib_i\}$ is the set of basis functions used for $V_h$.
We note that the decomposition $\ub_h=\sum \ub_i$, $\ub_i\in V_i$ is unique, and hence,
\begin{equation} \label{eqn:jacobi-failure}
    \begin{aligned}
        \|\ub_h\|_{D_\hg}^2 &= \sum_i \|\ub_i\|_{A_\hg}^2 \preceq (1+\gamma) \sum_i \|\ub_i\|_1^2 \preceq \frac{1+\gamma}{h^2} \sum_i \|\ub_i\|_0^2 \\
                          &\preceq (1+\gamma) h^{-2} \|\ub_h\|_0^2 \preceq (1+\gamma) h^{-2} \|\ub_h\|_{A_\hg}^2,
    \end{aligned}
\end{equation}
\review{where $a\preceq b$ means that there exists a constant $C$ independent of $h$ and $\gamma$ such that $a \le C b$.}
This bound is parameter dependent and degrades for large $\gamma$, i.e.~$D_\hg$ becomes a poor preconditioner for $A_\hg$ as $\gamma\to\infty$.

To obtain a bound independent of $\gamma$, one requires a space decomposition that respects the nullspace of the divergence operator, which we denote by
\begin{equation}
    \Nc_h = \{ \vb_h \in V_h : \Pi_{Q_h} (\nabla\cdot \vb_h) = 0 \}.
\end{equation}
To build intuition, we consider a $\ub_0\in \Nc_h$.
If the space decomposition satisfies
\begin{equation}\label{eqn:kernel-decomposition-property}
    \mathcal{N}_h = \sum_i V_i \cap \mathcal{N}_h,
\end{equation}
then $\ub_0$ can be written as
\begin{equation}\label{eqn:splitting-exists}
    \ub_0 = \sum_i \ub_{0,i}, \qquad \ub_{0,i}\in V_i\cap\mathcal{N}_h.
\end{equation}
Redoing the first steps of the calculation
in~\eqref{eqn:jacobi-failure}, and using that each of the $\ub_{0, i}$
are divergence-free, the second term
in~\eqref{eqn:paramdependent-bil-form} vanishes and we obtain
\begin{equation}\label{eqn:u0-ssc}
    \|\ub_0\|_{D_\hg}^2 \le \sum_i \|\ub_{0, i}\|_{A_\hg}^2 \preceq \sum_i \|\ub_{0, i}\|_1^2.
\end{equation}
We now make this idea rigorous and prove $\gamma$-independent spectral equivalence of $D_\hg$ and $A_\hg$.
A key assumption, which we will need to check for each element and space decomposition individually, is that the splitting in~\eqref{eqn:splitting-exists} is stable, so that the last term in~\eqref{eqn:u0-ssc} can be bounded.
This statement was proven for general parameter-dependent problems by Sch\"oberl but for completeness we include a proof for the special case of elasticity.

\begin{proposition}[{{\citet[Theorem 4.1]{schoberl1999b}}}]\label{prop:kernel-decomposition-theorem}
    Let $\{V_i\}$ be a space decomposition of $V_h$ with overlap $N_O$ and assume that the pair $V_h\times Q_h$ is inf-sup stable for the mixed problem
    \begin{equation}
        B((\ub, p), (\vb, q)) \defeq a(\ub, \vb) - (\nabla\cdot \vb, p) - (\nabla\cdot \ub, q).
    \end{equation}
    Assume that any $\ub_h\in V_h$ and $\ub_0 \in \mathcal{N}_h$ satisfy
    \begin{equation} \label{eqn:smoothing-bounds-requirements}
        \begin{aligned}
            \inf_{\substack{\ub_h=\sum \ub_i\\\ub_i\in V_i}} \sum_{i} \|\ub_i\|_{1}^2 &\le c_1(h) \|\ub_h\|_{0}^2,\\
            \inf_{\substack{\ub_0=\sum \ub_{0, i}\\\ub_{0,i}\in
    \mathcal{N}_h \cap V_i}} \sum_{i} \|\ub_{0,i}\|_{1}^2 &\le c_2(h) \|\ub_0\|_{0}^2,
        \end{aligned}
    \end{equation}
    \review{where $\|\cdot\|_0$ denotes the standard $L^2$ norm, and $\|\cdot\|_1$ denotes the standard $H^1$ norm.}
    Then it holds that
    \begin{equation}
        (c_1(h) + c_2(h))^{-1} D_\hg \preceq A_\hg \le N_O D_\hg,
    \end{equation}
    with constants independent of $\gamma$.
\end{proposition}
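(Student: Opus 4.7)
The plan is this. The upper bound $A_\hg \le N_O D_\hg$ is the standard finite-overlap estimate: each basis function of $V_h$ contributes to at most $N_O$ subspaces $V_i$, and the resulting constant is automatically $\gamma$-independent because it only reflects geometric overlap. The substance is the lower bound, and the variational characterisation of $\|\cdot\|_{D_\hg}$ reduces it to exhibiting, for each $\ub_h\in V_h$, a single decomposition $\ub_h = \sum_i \ub_i$ with $\ub_i \in V_i$ satisfying $\sum_i \|\ub_i\|_{A_i}^2 \preceq (c_1(h)+c_2(h)) \|\ub_h\|_{A_\hg}^2$ with a constant independent of $\gamma$.

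The construction proceeds in two stages. First, split $\ub_h = \ub_0 + \ub_\perp$ globally using an inf-sup lift: the inf-sup stability of $V_h \times Q_h$ supplies, via Fortin's construction, a right inverse of $\Pi_{Q_h} \circ (\nabla\cdot)$, yielding $\ub_\perp \in V_h$ with $\Pi_{Q_h}(\nabla\cdot\ub_\perp) = \Pi_{Q_h}(\nabla\cdot\ub_h)$ and the $\gamma$- and $h$-independent bound $\|\ub_\perp\|_1 \preceq \|\Pi_{Q_h}(\nabla\cdot\ub_h)\|_0$. Setting $\ub_0 \defeq \ub_h - \ub_\perp$ then gives $\ub_0 \in \Nc_h$. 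Second, apply the two hypotheses separately: the first gives $\ub_\perp = \sum_i \ub_{\perp,i}$ with $\ub_{\perp,i}\in V_i$ and $\sum_i\|\ub_{\perp,i}\|_1^2 \le c_1(h)\|\ub_\perp\|_0^2$, while the second gives $\ub_0 = \sum_i \ub_{0,i}$ with $\ub_{0,i}\in V_i \cap \Nc_h$ and $\sum_i\|\ub_{0,i}\|_1^2 \le c_2(h)\|\ub_0\|_0^2$. The candidate decomposition is $\ub_i \defeq \ub_{0,i} + \ub_{\perp,i}$.

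To estimate $\sum_i\|\ub_i\|_{A_i}^2$, we use that $\Pi_{Q_h}(\nabla\cdot\ub_{0,i})=0$ to kill the divergence term on the kernel pieces, giving
\begin{equation*}
\sum_i\|\ub_i\|_{A_i}^2 \preceq \sum_i\|\ub_{0,i}\|_1^2 + \sum_i\|\ub_{\perp,i}\|_1^2 + \gamma\sum_i\|\Pi_{Q_h}(\nabla\cdot\ub_{\perp,i})\|_0^2.
\end{equation*}
Korn's first inequality, which holds since $\Gamma_D$ has positive measure, together with the Fortin estimate on $\ub_\perp$, yields $\|\ub_0\|_0 + \|\ub_\perp\|_0 \preceq \|\ub_h\|_{A_\hg}$, so the first two sums are bounded by $(c_1(h)+c_2(h))\|\ub_h\|_{A_\hg}^2$ directly from the two hypotheses.

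The main obstacle, and the only place where $\gamma$-independence is delicate, is the third sum. Continuity of $\nabla\cdot : H^1 \to L^2$ and the $L^2$-contraction of $\Pi_{Q_h}$ give $\sum_i\|\Pi_{Q_h}(\nabla\cdot\ub_{\perp,i})\|_0^2 \preceq \sum_i \|\ub_{\perp,i}\|_1^2 \le c_1(h)\|\ub_\perp\|_0^2 \le c_1(h)\|\ub_\perp\|_1^2$, and the Fortin bound on $\ub_\perp$ then performs the decisive cancellation
\begin{equation*}
\gamma \sum_i\|\Pi_{Q_h}(\nabla\cdot\ub_{\perp,i})\|_0^2 \preceq c_1(h)\,\gamma\,\|\Pi_{Q_h}(\nabla\cdot\ub_h)\|_0^2 \le c_1(h)\|\ub_h\|_{A_\hg}^2.
\end{equation*}
It is precisely this step---where the $\gamma$-independent $H^1$-bound on the inf-sup lift trades a factor of $\gamma$ for the $A_\hg$-norm of $\ub_h$---that makes the whole estimate $\gamma$-robust; without it the $(1+\gamma)$ prefactor inherited from Jacobi-type bounds would reappear. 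Summing the three contributions then yields the lower bound with constant $c_1(h)+c_2(h)$.
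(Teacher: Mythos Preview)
Your proof is correct and follows essentially the same approach as the paper. Both arguments split $\ub_h=\ub_0+\ub_\perp$ using inf-sup stability (you via a Fortin right inverse, the paper via solving the auxiliary mixed problem~\eqref{eqn:decomposition-u1-property}; these are equivalent), obtain the key bound $\|\ub_\perp\|_1 \preceq \|\Pi_{Q_h}(\nabla\cdot\ub_h)\|_0$, apply the two splitting hypotheses to $\ub_\perp$ and $\ub_0$ separately, and then absorb the dangerous factor of $\gamma$ on the $\ub_\perp$-part via $\gamma\|\ub_\perp\|_1^2 \preceq \gamma\|\Pi_{Q_h}(\nabla\cdot\ub_h)\|_0^2 \le \|\ub_h\|_{A_\hg}^2$.
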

\begin{proof}
Let $\ub_h\in V_h$, and consider a decomposition $\ub_h = \ub_0 + \ub_1$ obtained by solving
\begin{equation}\label{eqn:decomposition-u1-property}
    B((\ub_1, p_1), (\vb_h, q_h)) = (\nabla\cdot \ub_h, q_h) \qquad \text{for all } (\vb_h, q_h)\in V_h\times Q_h.
\end{equation}
Testing with $\vb_h=0$ we obtain that $\Pi_{Q_h} (\nabla\cdot \ub_1) = \Pi_{Q_h} (\nabla\cdot \ub_h)$ and hence $\Pi_{Q_h} (\nabla \cdot \ub_0) = 0$.
Furthermore, \review{denoting $\|(\vb_h, q_h)\| = \|\vb_h\|_1 + \|q_h\|_0$}, by stability we have 
\begin{equation}\label{eqn:bounds-for-u1-by-stability}
    \begin{aligned}
        \|\ub_1\|_{1} & \preceq \sup_{\substack{\vb_h\in V_h\\ q_h\in Q_h}} \frac{B((\ub_1, p_1), (\vb_h, q_h))}{\|(\vb_h, q_h)\|} \\
                    & \overset{\mathclap{\eqref{eqn:decomposition-u1-property}}}\le\sup_{\substack{\vb_h\in  V_h\\  q_h\in Q_h}} \frac{\|\Pi_{Q_h} (\nabla\cdot \ub_h)\|_0 \|q_h\|_0}{\|(\vb_h, q_h)\|}\\
                    & \le\|\Pi_{Q_h} (\nabla\cdot \ub_h)\|_0
    \end{aligned}
\end{equation}
and hence $\|\ub_1\|_1 \preceq \|\ub_h\|_1$ and $\|\ub_1\|_1\preceq \gamma^{-1/2} \|\ub_h\|_{A_\hg}$.
Using $\ub_0 = \ub_h - \ub_1$ we obtain in addition that $\|\ub_0\|_1\preceq \|\ub_h\|_1$ and conclude
\begin{equation}
    \begin{aligned}
        \|\ub_h\|_{D_h}^2 & \le \inf_{\substack{\ub_1=\sum \ub_{1, i}\\\ub_{1,i}\in V_i}} \sum_{i} \xunderbrace{\|\ub_{1,i}\|_{A_\hg}^2}_{\le (1+\gamma)\|\ub_{1,i}\|_1^2} +  \inf_{\substack{\ub_0=\sum \ub_{0, i}\\\ub_{0,i}\in \mathcal{N}_h \cap V_i}} \sum_{i} \xunderbrace{\|\ub_{0,i}\|_{A_\hg}^2}_{=\|\ub_{0,i}\|_1^2} \\[2pt]
                        &\overset{\mathclap{\eqref{eqn:smoothing-bounds-requirements}}}\preceq (1+\gamma) c_1(h) \|\ub_1\|_0^2 + c_2(h) \|\ub_0\|_0^2\\
                        &\preceq (1+\gamma) c_1(h) \|\ub_1\|_1^2 + c_2(h) \|\ub_0\|_1^2\\
                        &\preceq (c_1(h) + c_2(h)) \|\ub_h\|_{A_\hg}^2.
    \end{aligned}
\end{equation}
\end{proof}

We now discuss two approaches to finding space decompositions $\{V_i\}$ that satisfy the kernel decomposition property in~\eqref{eqn:kernel-decomposition-property}.
\subsection{Characterisation of the kernel using exact de Rham complexes}
We begin by recalling some fundamental de Rham complexes.

The smooth de Rham complex in two dimensions is given by
\begin{equation}
  \label{eqn:smooth-complex-2d}
  \mathbb{R} \xrightarrow{\operatorname{id}} C^\infty(\Omega)
  \xrightarrow{\curl} [C^\infty(\Omega)]^2 \xrightarrow{\div} C^\infty(\Omega) \xrightarrow{\operatorname{null}} 0,
\end{equation}
and in three dimensions
\begin{equation}
  \label{eqn:smooth-complex-3d}
  \mathbb{R} \xrightarrow{\operatorname{id}} C^\infty(\Omega)
  \xrightarrow{\grad} [C^\infty(\Omega)]^3 \xrightarrow{\curl} [C^\infty(\Omega)]^3 \xrightarrow{\div} C^\infty(\Omega) \xrightarrow{\operatorname{null}} 0.
\end{equation}
Such a complex is called exact if the kernel of an operator is given by the range of the preceding operator in the sequence, e.g.~when $\mathrm{range}\curl = \mathrm{ker}\div$.
It is well known that these complexes are exact precisely when the domain is simply connected \cite[p.~18]{arnold_finite_2006}.
Such an exactness property is of interest here because it allows us to characterise divergence-free vector fields as the curls of potentials.

Several lower regularity variants of these complexes exist. Likely the best-known ones are the complexes
\begin{align}
  \label{eqn:lowreg-complex-2d}
    &\qquad\quad \mathbb{R} \xrightarrow{\operatorname{id}} H^1(\Omega)
    \xrightarrow{\curl} H(\div, \Omega) \xrightarrow{\div} L^2(\Omega) \xrightarrow{\operatorname{null}} 0,&\pushright{\text{(2D)}\hspace{-0.9cm}}\\
  \label{eqn:lowreg-complex-3d}
    &\mathbb{R} \xrightarrow{\operatorname{id}} H^1(\Omega)
    \xrightarrow{\grad} H(\curl, \Omega) \xrightarrow{\curl} H(\div, \Omega) \xrightarrow{\div} L^2(\Omega) \xrightarrow{\operatorname{null}} 0.&\pushright{\text{(3D)}\hspace{-0.9cm}}
\end{align}
In the last decades, a significant effort has been made to find finite element spaces that form exact subcomplexes of~\eqref{eqn:lowreg-complex-2d} and~\eqref{eqn:lowreg-complex-3d}~\citep{arnold_finite_2006}.
For this work, we are interested in characterising the kernel of the divergence of vector fields with $H^1$ regularity.
Hence, we study the so-called Stokes complexes where the function spaces enjoy higher regularity, given by
\begin{equation}
  \label{eq:stokes-complex-2d}
  \mathbb{R} \xrightarrow{\operatorname{id}} H^2(\Omega)
  \xrightarrow{\curl} [H^1(\Omega)]^2 \xrightarrow{\div} L^2(\Omega) \xrightarrow{\operatorname{null}} 0,
\end{equation}
and in three dimensions
\begin{equation}
  \label{eq:stokes-complex-3d}
  \mathbb{R} \xrightarrow{\operatorname{id}} H^2(\Omega)
  \xrightarrow{\grad} H^1(\curl, \Omega) \xrightarrow{\curl} [H^1(\Omega)]^3 \xrightarrow{\div} L^2(\Omega) \xrightarrow{\operatorname{null}} 0,
\end{equation}
where $H^1(\curl, \Omega) = \{\ub \in [H^1(\Omega)]^3: \curl \ub \in [H^1(\Omega)]^3 \}$.
Discrete subcomplexes of these Stokes complexes are much harder to construct and often result in high order polynomials due to the high regularity requirements.

Assume now that we have been given a discrete exact subsequence of~\eqref{eq:stokes-complex-2d} or~\eqref{eq:stokes-complex-3d} of the form
\begin{equation}
 \cdots \rightarrow \Sigma_h \xrightarrow{\curl} V_h \xrightarrow{\div} Q_h \xrightarrow{\operatorname{null}} 0.
\end{equation}
Then for a divergence-free discrete vector field $\ub_h\in V_h$, we can write it as the $\curl$ of a potential $\Phib_h\in\Sigma_h$.
We note that $\Phib_h$ is a vector field in three dimensions but a scalar field in two dimensions.

Assume $\Sigma_h$ has a basis given by $\{\Phib_j\}$, so that $\Phib_h$ can be written as $\Phib_h = \sum_j c_j \Phib_j$ for some coefficients $c_j$.
Now we can define a divergence-free decomposition of $\ub_h$ as $\ub_h = \sum_j \ub_j$ where $\ub_j = c_j \nabla\times \Phib_j$.
Hence, a space decomposition $\{V_i\}$ such that $\nabla \times \Phib_j \in V_i$ for some $i$ for all basis functions $\Phib_j$ decomposes the kernel.
To understand how to choose a decomposition $\{V_i\}$ that satisfies this property, we have to examine the support of the basis functions $\{\Phib_j\}$.

In two dimensions and on barycentrically refined meshes, choosing $\Sigma_h$ to be the Hsieh-Clough-Tocher (HCT) finite element space together with continuous $[\Ptwo]^2$ finite element functions for $V_h$ and discontinuous $\Pone$ finite element functions for $Q_h$ yields an exact discrete complex~\cite[p.~514]{john2017}.
The three elements are displayed in Figure~\ref{fig:hct-complex}.
\begin{figure}
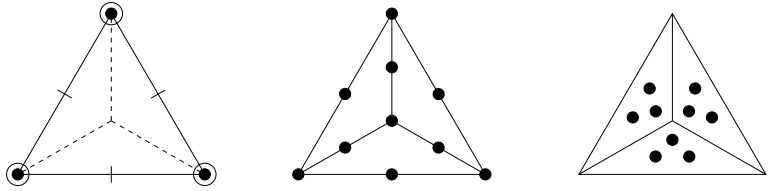

    \begin{center}
        \includestandalone[width=10cm]{./figures/hct}
    \end{center}
    \caption{A 2D exact Stokes complex on barycentrically refined meshes.}\label{fig:hct-complex}
\end{figure}
We call the mesh prior to barycentric refinement the \emph{macro mesh} and its cells \emph{macro elements}.
For a given vertex $v_i$ in the macro mesh, we define the $\MacroStar(v_i)$ of the vertex as the union of all macro elements touching the vertex\footnote{The \emph{star}
operation is a standard concept in algebraic topology \cite[\S 2]{munkres1984}; given a simplicial complex, the star of a simplex $p$ is the union of the interiors of all simplices that contain $p$. The
$\MacroStar$ operation is merely the star applied to vertices on the macro mesh.}.
We then see that for every HCT basis function $\Phib_j$ there exists a vertex $v_i$ such that $\mathrm{supp}(\Phib_j)\subset \MacroStar(v_i)$.
Hence, also $\mathrm{supp}(\nabla\times \Phib_j)\subset\MacroStar(v_i)$ and if we define
\begin{equation}
    V_i = \{\vb\in V_h : \mathrm{supp}(\vb)\subset\MacroStar(v_i)\}\label{eqn:macrostarsubspaces}
\end{equation}
then these subspaces decompose the kernel.
More recently in~\citet{fu_exact_2018} an $H^1(\curl,\Omega)$-conforming element on barycentrically refined tetrahedral meshes was introduced that forms an exact sequence with piecewise cubic continuous velocities and piecewise quadratic discontinuous pressures.
Hence, by the same argument we obtain that the $\MacroStar$ around vertices provides a decomposition of the kernel of the divergence in three dimensions.

We note that in two dimensions a quintic basis for $\Sigma_h$ is known even without macro element structure~\citep{morgan_nodal_1975}, which was used by~\citet{lee2009} to construct a robust relaxation method for sufficiently high polynomial degrees.
\subsection{Decomposing the kernel by a localised Fortin operator}
In the previous section we introduced discrete exact sequences as a tool to construct a space decomposition that also decomposes the kernel of the divergence.
When such an exact sequence exists, the approach is clearly attractive since the space decomposition can be found by simply studying the support of the basis functions in $\Sigma_h$.
However, an exact sequence only guarantees the existence of some $\Phib_h\in \Sigma_h$ so that $\nabla\times\Phib_h = \ub_h$, but does not make statements about its norm.
The proof of exactness in~\cite{fu_exact_2018} for example is based on a counting argument and hence does not provide any stability bounds.

In two dimensions, this is not an issue as it is straightforward to obtain an element in $\Sigma_h$ with bounded norm, as we will now argue.
For a divergence-free vector field $\ub_h$ we know~\citep[Theorem 3.3]{girault1986} that there exists a $\Phib\in H^2_0(\Omega)$ such that $\nabla\times \Phib = \ub_h$ and $\|\Phib\|_2 \preceq \|\ub_h\|_1$.
Since $\nabla \times$ in two dimensions simply corresponds to the
rotated gradient, we see that any two $\Phib$ that satisfy
$\nabla\times \Phib = \ub_h$ are equal up to a constant, and hence we
have in fact $\Phib\in\Sigma_h$. 

In three dimensions, the second step in this argument fails.
It was proven \review{in~\citet[Proposition~4.1]{costabel_bogovskii_2010}} that the \emph{regularised Poincar\'e} operator provides a bounded linear map
\begin{equation}
    R: [H^s(\Omega)]^3\to [H^{s+1}(\Omega)]^3 \ \text{s.t.} \ \nabla\times R(\ub) = \ub \ \text{for all} \ \ub \in H^s \ \text{with} \ \nabla \cdot \ub = 0,
\end{equation}
for any $s \in \mathbb{R}$ and domain $\Omega$ that is star-like with respect to some ball.
However, although the norm of the potential obtained from this map is bounded by the norm of $\ub$, we cannot directly infer this property for the discrete potential as the uniqueness property that we exploited in two dimensions does not hold.
In fact, we can add any gradient to the potential $\Phib$ and still preserve $\nabla\times\Phib=\ub$. This motivates the development
of a different strategy.

We briefly recall the approach of Sch\"oberl for constructing the space decomposition.
We start with a $\Phib \in H^2$ and consider a smooth partition of unity $\{\rho_i\}$.
Considering $\ub_i = \nabla \times (\rho_i \Phib)$, we immediately obtain that $\nabla \cdot \ub_i = 0$
with $\sum_i \ub_i = \ub$. However, this construction may not yield an element of the finite element space. We thus
consider $\ub_i = I_h\left(\nabla \times (\rho_i \Phib)\right)$ for an appropriate interpolation operator $I_h$
that preserves the divergence in a suitable sense.
\begin{proposition}
    Assume that $\Omega$ is star-like with respect to some ball and let $I_h : V \to V_h$ be a Fortin operator, i.e.~it satisfies
    \begin{itemize}
        \item $I_h$ is linear and continuous,
        \item $(q_h, \nabla\cdot(I_h(\vb))) = (q_h, \nabla\cdot \vb)$ for all $q_h\in Q_h$ and $\vb\in V$,
        \item $I_h(\vb_h) = \vb_h$ for $\vb_h\in V_h$.
    \end{itemize}
    Furthermore, let $\{\Omega_i\}$ be a covering of $\Omega$ with an associated smooth partition of unity $\{\rho_i\}$ satisfying
    \begin{equation}
        \begin{aligned}
            \|\rho_i\|_{L^{\infty}}&\le 1,  \\
            \|\rho_i\|_{W^{1, \infty}}&\preceq h^{-1},\\
            \|\rho_i\|_{W^{2, \infty}}&\preceq h^{-2},\\
            \mathrm{supp}(\rho_i)&\subset \Omega_i,
        \end{aligned}
    \end{equation}
    then the space decomposition $\{V_i\}$ with
    \begin{equation}
        V_i \defeq \{ I_h(v) : v\in V,\ \supp(v) \subset \Omega_i\}
    \end{equation}
    satisfies
    \begin{equation}\label{eqn:firstsplitting}
        \inf_{\substack{\ub_h=\sum \ub_i\\\ub_i\in V_i}} \sum_{i} \|\ub_i\|_{1}^2 \preceq h^{-2} \|\ub_h\|_{0}^2,
    \end{equation}
    and
    \begin{equation}\label{eqn:kernel-splitting}
        \inf_{\substack{\ub_0=\sum \ub_{0, i}\\\ub_{0,i}\in
\mathcal{N}_h \cap V_i}} \sum_{i} \|\ub_{0,i}\|_{1}^2 \preceq h^{-4} \|\ub_0\|_{0}^2.
    \end{equation}
\end{proposition}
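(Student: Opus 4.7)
The plan is to build explicit admissible decompositions in each case by combining the partition of unity $\{\rho_i\}$ with the Fortin operator $I_h$, then to bound each piece using continuity of $I_h$, the derivative bounds on $\rho_i$, an inverse inequality on $V_h$, and finite overlap of the covering $\{\Omega_i\}$ (which I absorb into the $\preceq$ constant throughout). Because the setting preserves the exactness $\nabla\cdot V_h = Q_h$, I will use that $\ub_0\in\mathcal{N}_h$ is in fact pointwise divergence-free.

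For~\eqref{eqn:firstsplitting} I would simply set $\ub_i \defeq I_h(\rho_i \ub_h)$. Since $\mathrm{supp}(\rho_i \ub_h) \subset \Omega_i$ this lies in $V_i$, and linearity of $I_h$ together with the Fortin reproduction property $I_h(\ub_h) = \ub_h$ give $\sum_i \ub_i = \ub_h$. Continuity of $I_h$ reduces matters to bounding $\|\rho_i \ub_h\|_1$; the product rule and the hypotheses on $\rho_i$ yield
\begin{equation*}
\|\rho_i \ub_h\|_1 \preceq \|\ub_h\|_{1,\Omega_i} + h^{-1}\|\ub_h\|_{0,\Omega_i}.
\end{equation*}
An inverse inequality on $V_h$ absorbs the $H^1$ term into $h^{-1}\|\ub_h\|_{0,\Omega_i}$, and summing over $i$ using finite overlap gives~\eqref{eqn:firstsplitting}.

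For~\eqref{eqn:kernel-splitting} I would first lift $\ub_0$ to a continuous potential via the regularised Poincar\'e operator $R$: since $\Omega$ is star-like and $\ub_0$ is divergence-free, $\Phib \defeq R(\ub_0)$ satisfies $\nabla\times\Phib = \ub_0$ together with the stability bounds $\|\Phib\|_{s+1} \preceq \|\ub_0\|_s$ for $s = 0,1$. I would then set $\ub_{0,i} \defeq I_h(\nabla\times(\rho_i \Phib))$. Membership in $V_i$ is immediate from $\mathrm{supp}(\rho_i)\subset\Omega_i$; the partition-of-unity identity and Fortin reproduction give $\sum_i \ub_{0,i} = I_h(\nabla\times \Phib) = I_h(\ub_0) = \ub_0$; and the divergence-preservation property of $I_h$ combined with $\nabla\cdot(\nabla\times\,\cdot\,) = 0$ yields $\Pi_{Q_h}(\nabla\cdot \ub_{0,i}) = 0$, so $\ub_{0,i}\in\mathcal{N}_h$. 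Continuity of $I_h$ and the bound $\|\nabla\times \wb\|_1 \preceq \|\wb\|_2$ reduce the norm estimate to the product-rule expansion
\begin{equation*}
\|\rho_i \Phib\|_2^2 \preceq h^{-4}\|\Phib\|_{0,\Omega_i}^2 + h^{-2}\|\Phib\|_{1,\Omega_i}^2 + \|\Phib\|_{2,\Omega_i}^2.
\end{equation*}
Summing over $i$ and inserting $\|\Phib\|_0 + \|\Phib\|_1 \preceq \|\ub_0\|_0$ and $\|\Phib\|_2 \preceq \|\ub_0\|_1 \preceq h^{-1}\|\ub_0\|_0$ (the last step by the inverse inequality on $V_h$) collapses everything to $h^{-4}\|\ub_0\|_0^2$.

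The main obstacle is obtaining a potential $\Phib$ with these quantitative stability bounds: a discrete stream function is unavailable in three dimensions in general, which is the very motivation for the Fortin approach, so one must lift to a continuous potential and transport it into $V_h$ via $I_h$, relying on the second Fortin property to preserve the discrete divergence. The $h^{-4}$ on the right-hand side is driven entirely by the term $(\nabla^2\rho_i)\Phib$ in the product-rule expansion, paired with $\|\Phib\|_0\preceq \|\ub_0\|_0$.
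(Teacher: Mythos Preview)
Your proposal is correct and follows essentially the same approach as the paper: both splittings are built exactly as you describe, with $\ub_i=I_h(\rho_i\ub_h)$ for~\eqref{eqn:firstsplitting} and $\ub_{0,i}=I_h(\nabla\times(\rho_i\Phib))$ for~\eqref{eqn:kernel-splitting}, using the continuous potential from the regularised Poincar\'e operator (or, in two dimensions, the stream function of \citet[Theorem~3.3]{girault1986}), the product-rule expansion, and an inverse inequality in the final step. Your explicit check that $\ub_{0,i}\in\mathcal{N}_h$ via the divergence-preservation property of $I_h$ is a welcome clarification that the paper leaves implicit.
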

\begin{remark}
   This is an abstraction of Sch\"oberl's approach.
   He used this strategy for a particular covering and a particular Fortin operator.
\end{remark}
\begin{remark}
    In the multigrid context the goal is to choose the covering and the Fortin operator so that the spaces $\{V_i\}$ are small, as we use direct methods to solve the problems on the spaces $V_i$.
\end{remark}
\begin{proof}
    To prove the first statement, for $\ub_h \in V_h$ we define
   \begin{equation}
        \ub_i = I_h(\rho_i \ub_h) \in V_i,   
   \end{equation} 
   and observe that
   \begin{equation}
       \sum_i \ub_i = I_h \bigg (\sum_i \rho_i \ub_h \bigg) = I_h(\ub_h) = \ub_h
   \end{equation}
   and 
   \begin{equation}
       \begin{aligned}
           \|\ub_i\|_{H^1(\Omega)}^2 & \preceq \|\rho_i \ub_h\|_{H^1(\Omega_i)}^2 \\
                         &\le \|\ub_h\|_{L^2(\Omega_i)}^2 \|\nabla \rho_i\|_{L^\infty(\Omega_i)}^2 + \|\ub_h\|_{H^1(\Omega_i)}^2 \|\rho_i\|_{L^\infty(\Omega_i)}^2 \\
                         &\preceq h^{-2} \|\ub_h\|_{L^2(\Omega_i)}^2.
       \end{aligned}
     \end{equation}
     \review{Summing over $i$ yields~\eqref{eqn:firstsplitting}.}

     For the second splitting \eqref{eqn:kernel-splitting} we first note that given a $\ub_0\in
     \Nc_h$ by \citet[Theorem 3.3]{girault1986} in 2D and by
     \review{\citet[Proposition 4.1]{costabel_bogovskii_2010}} in 3D there exists a $\Phib
     \in H^2$ such that $\nabla\times \Phib = \ub_0$ and
     $\|\Phib\|_2\preceq \|\ub_0\|_1$ and $\|\Phib\|_1 \preceq
     \|\ub_0\|_0$.

   At this point we have used the fact that discretely divergence-free vector fields are exactly divergence-free.
   This is not always necessary:
   for example in the case of the $\PtwoPzero$ element, one can modify $\ub_0$ in the interior of each cell to obtain an exactly divergence-free field.

   The estimates are obtained in a manner similar to the previous case: we define
   \begin{equation}
       \ub_{0, i} = I_h(\nabla\times (\rho_i \Phib))
   \end{equation}
   and then calculate
   \begin{equation}
       \begin{aligned}
           \|\ub_{0, i}\|_{H^1(\Omega)}^2 & \preceq \|\nabla\times(\rho_i \Phib)\|_{H^1(\Omega_i)}^2 \\
           &\preceq \|\rho_i \Phib\|_{H^2(\Omega_i)}^2 \\
           &\le \|\Phib\|_{L^2(\Omega_i)}^2 \|\nabla^2 \rho_i\|_{L^\infty(\Omega_i)}^2 + \|\Phib\|_{H^1(\Omega_i)}^2 \|\nabla \rho_i\|_{L^\infty(\Omega_i)}^2+ \|\Phib\|_{H^2(\Omega_i)}^2 \|\rho_i\|_{L^\infty(\Omega_i)}^2 \\
           &\le  h^{-4} \|\Phib\|_{L^2(\Omega_i)}^2 + h^{-2} \|\Phib\|_{H^1(\Omega_i)}^2 + \|\Phib\|_{H^2(\Omega_i)}^2.
       \end{aligned}
   \end{equation}
   Summing over $i$ and denoting the overlap by $N_O$ we obtain
   \begin{equation}
       \begin{aligned}
           \sum_i \|\ub_{0, i}\|_{\review{H^1(\Omega)}}^2 &\preceq N_O \big(h^{-4} \|\Phib\|_{L^2(\Omega)}^2 + h^{-2} \|\Phib\|_{H^1(\Omega)}^2 + \|\Phib\|_{H^2(\Omega)}^2\big)\\
                                     &\preceq N_O \big(h^{-4} \|\ub_0\|_{L^2(\Omega)}^2 + h^{-2} \|\ub_0\|_{L^2(\Omega)}^2 + \|\ub_0\|_{H^1(\Omega)}^2\big)\\
                                     &\preceq N_O h^{-4} \|\ub_0\|_{L^2(\Omega)}^2,
       \end{aligned}
   \end{equation}
   where we used an inverse inequality in the last step.
\end{proof}

Sch\"oberl uses an operator $I_h$ that is essentially the same one as in the classical proof for the inf-sup stability of the $\PtwoPzero$ element, with minor modifications so that it uses only values on element boundaries.
This leads to small subspaces $V_i$ when $\Omega_i$ is defined as a domain within the star around each vertex, see Figure~\ref{fig:alfeld-stars}.
\begin{figure}
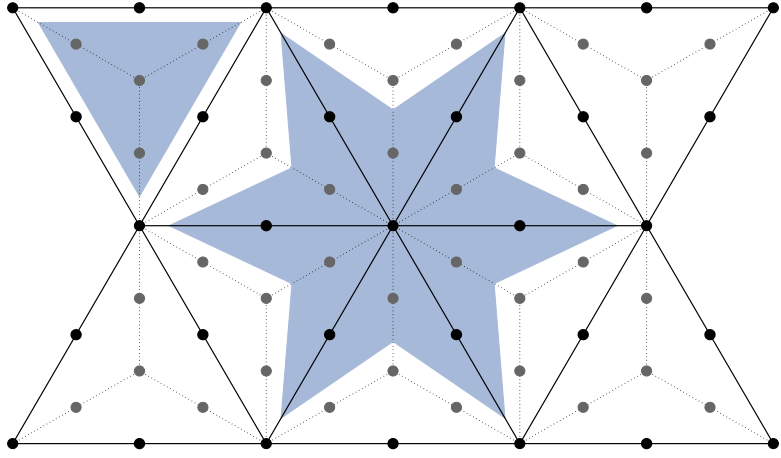

    \begin{center}
        \includestandalone[width=0.7\textwidth]{./figures/alfeld-stars}
    \end{center}
    \caption{The two types of patches obtained by taking the star around vertices in a barycentrically refined mesh.}\label{fig:alfeld-stars}
\end{figure}
However, as of writing the authors are not aware of a similar construction of a Fortin operator for the Scott--Vogelius element in either two or three dimensions that could be modified.\footnote{\review{During the review process we were made aware of recent work \cite[Theorem 3.6]{Boffi21} in which a Fortin operator is constructed in two dimensions for a similar problem using a rotated gradient and different boundary conditions.}} 
To obtain a Fortin operator with the needed locality, we instead use
that the \review{finite} element \review{pair} is known to be inf-sup stable on a single macro
element:
we begin by considering a Fortin operator $\tilde I_h$ that preserves the divergence with respect to pressures that are piecewise constant on macro elements and then enrich this operator
with local Fortin operators on each macro element.

Given a domain $\Omega$, we consider a simplicial mesh $\mesh_h = \{K^h\}$ with $\bigcup_{K^h\in\mesh_h} K^h = \overline{\Omega}$ and $(K^h_1)^\circ \cap (K^h_2)^\circ = \emptyset$ for distinct $K^h_1, K^h_2 \in \mesh_h$.
The elements $K^h\in\mesh_h$ will be referred to as the \emph{macro cells}.

\begin{lemma} \label{lem:glue-fortin}
    Let $\tilde Q_h$ be defined by
    \begin{equation}
        \tilde Q_h \defeq \{ \tilde q_h \in L^2(\Omega): \tilde q_h \vert_K \equiv \mathrm{const}\,\text{ for all } K \in \mesh_h\}
    \end{equation}
    and assume that there exists a Fortin operator $\tilde I_h:V \to V_h$ for the pair $V_h\times \tilde Q_h$, that is
    \begin{itemize}
        \item $\tilde I_h$ is linear and continuous,
        \item $(\tilde q_h, \nabla\cdot(\tilde I_h(\vb))) = (\tilde q_h, \nabla\cdot \vb)$ for all $\tilde q_h\in \tilde Q_h$ and $\vb\in V$,
        \item $\tilde I_h(\vb_h) = \vb_h$ for $\vb_h\in V_h$,
        \item there exists a covering $\{\Omega_i\}$ of $\Omega$ such that for all vertices $v_i$ in the macro mesh it holds that $\Omega_i \subset \MacroStar(v_i)$ and
            \begin{equation}
                \tilde I_h(\vb) \in V_i \text{ for all }\vb \in V \text{ such that } \mathrm{supp}(\vb)\subset \Omega_i
            \end{equation}
            where $\{V_i\}$ are the subspaces defined in~\eqref{eqn:macrostarsubspaces}, that is
            \begin{equation}
                V_i = \{\vb\in V_h : \mathrm{supp}(\vb)\subset\MacroStar(v_i)\}.
            \end{equation}
    \end{itemize}
    Furthermore assume that the \review{finite} element \review{pair} is stable on each macro cell.
    Then there exists a linear map $I_h : V\to V_h$ such that
    \begin{itemize}
        \item $I_h$ is linear and continuous,
        \item $(q_h, \nabla\cdot(I_h(\vb))) = (q_h, \nabla\cdot \vb)$ for all $q_h\in Q_h$ and $\vb\in V$,
        \item $I_h(\vb_h) = \vb_h$ for $\vb_h\in V_h$,
        \item the covering $\{\Omega_i\}$ has the property that for all vertices $v_i$ in the macro mesh it holds
            \begin{equation}\label{eqn:fortin-locality}
                I_h(\vb) \in V_i \text{ for all }\vb \in V \text{ such that } \mathrm{supp}(\vb)\subset \Omega_i.
            \end{equation}
    \end{itemize}
\end{lemma}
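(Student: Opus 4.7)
The plan is to correct the coarser Fortin operator $\tilde I_h$ by adding bubble-like corrections supported inside each macro cell so as to upgrade the divergence-preservation property from macro-constant pressures to all of $Q_h$, without destroying the projection property or the locality. The central observation is that the macro-element inf-sup stability gives, on each macro cell $K$, a local Fortin-type surjection from $V_h \cap H^1_0(K)$ onto the mean-zero subspace of $Q_h(K)$, and the residual $\nabla\cdot(\vb-\tilde I_h\vb)$ restricted to $K$ already has mean zero by virtue of the property of $\tilde I_h$.

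\textbf{Step 1 (set-up).} Given $\vb\in V$, compute $\tilde I_h(\vb)\in V_h$. For each macro cell $K\in\mesh_h$, set $r_K \defeq \PqK(\nabla\cdot\vb - \nabla\cdot\tilde I_h(\vb))|_K$, where $\PqK$ denotes the $L^2(K)$-projection onto $Q_h(K)=Q_h|_K$. Since $\tilde q_h\equiv 1$ on $K$ (extended by zero elsewhere) lies in $\tilde Q_h$, the Fortin property of $\tilde I_h$ gives $\int_K \nabla\cdot(\vb-\tilde I_h(\vb))=0$, so $r_K$ is mean zero on $K$.

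\textbf{Step 2 (local correction).} Inf-sup stability of the pair on $K$ with homogeneous boundary conditions produces $\bb_K\in V_h\cap H^1_0(K)$ (extended by zero outside $K$) such that $\PqK(\nabla\cdot \bb_K)=r_K$ with $\|\bb_K\|_{H^1(K)}\preceq \|r_K\|_{L^2(K)}$; this can be done linearly, for instance by picking $\bb_K$ as the minimum-norm preimage or by composing any local Fortin operator with the residual map. Define
\begin{equation}
I_h(\vb)\defeq \tilde I_h(\vb) + \sum_{K\in\mesh_h}\bb_K.
\end{equation}

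\textbf{Step 3 (verification).} Linearity and continuity follow from linearity/continuity of $\tilde I_h$, of $\PqK$, and of $\vb\mapsto \bb_K$, combined with the estimate $\|r_K\|_{L^2(K)}\preceq \|\vb\|_{H^1}+\|\tilde I_h\vb\|_{H^1}\preceq \|\vb\|_{H^1}$ and finite overlap of the macro cells. For the projection property: if $\vb=\vb_h\in V_h$, then $\tilde I_h(\vb_h)=\vb_h$, the residuals $r_K$ vanish, all $\bb_K=0$, and $I_h(\vb_h)=\vb_h$. For locality~\eqref{eqn:fortin-locality}: if $\supp(\vb)\subset\Omega_i\subset\MacroStar(v_i)$, then $\tilde I_h(\vb)\in V_i$ by assumption, and on any macro cell $K\not\subset\MacroStar(v_i)$ both $\vb$ and $\tilde I_h(\vb)$ vanish, so the corresponding $\bb_K=0$; the remaining $\bb_K$ are supported in $K\subset\MacroStar(v_i)$.

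\textbf{Step 4 (divergence preservation, the main obstacle).} For $q_h\in Q_h$, decompose $q_h|_K = \bar q_h|_K + q_h'|_K$ on each macro cell with $\bar q_h|_K \defeq |K|^{-1}\int_K q_h$ and $\int_K q_h' = 0$. Against the macro-piecewise-constant part $\bar q_h\in \tilde Q_h$, the bubble contributions vanish because $\int_K \bar q_h\, \nabla\cdot\bb_K = \bar q_h|_K \int_{\partial K}\bb_K\cdot\nn =0$, and the claim reduces to the Fortin property of $\tilde I_h$. Against the mean-zero part $q_h'$, use that $q_h'|_K\in Q_h(K)$ is already mean zero, so on each $K$
\begin{equation}
(q_h', \nabla\cdot \bb_K)_K = (q_h', \PqK\nabla\cdot\bb_K)_K = (q_h', r_K)_K = (q_h', \nabla\cdot(\vb-\tilde I_h\vb))_K,
\end{equation}
where the last equality uses $q_h'\in Q_h(K)$ and that $\PqK$ is self-adjoint. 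Summing over $K$ gives $(q_h',\nabla\cdot I_h\vb)=(q_h',\nabla\cdot\vb)$, and combining with the macro-constant part yields $(q_h,\nabla\cdot I_h\vb)=(q_h,\nabla\cdot\vb)$. The delicate bookkeeping in this step --- splitting $q_h$ and showing that each bubble affects only the mean-zero component on its own macro cell --- is the crux of the lemma; everything else is a routine consequence of linearity and the assumed properties of $\tilde I_h$.
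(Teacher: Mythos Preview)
Your argument is correct and follows essentially the same route as the paper: both construct $I_h = \tilde I_h + \sum_K (\text{local correction on }K)$ with the correction supported in $V_h\cap H^1_0(K)$ and obtained from the macro-element inf-sup stability, and both verify the divergence identity by splitting $Q_h$ into macro-piecewise-constant and macro-mean-zero parts. The only cosmetic difference is that the paper applies a local Fortin operator $I_h^K$ directly to the velocity residual $(\vb-\tilde I_h\vb)|_K$, whereas you first form the divergence residual $r_K$ and then take a bounded linear right-inverse; these constructions are equivalent.
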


\begin{remark}
        The key difference between $\tilde I_h$ and $I_h$ is that the former only has to preserve the divergence with respect to test functions in the smaller space $\tilde Q_h$, but $I_h$ preserves the divergence with respect to test functions in the full space $Q_h$.
\end{remark}

\begin{remark}
        The advantageous consequence of this result is that the macro star gives a kernel-capturing space decomposition
with small support, suitable for use as a multigrid relaxation method.
\end{remark}

Before proving this statement, we give two examples.

For the first example, consider the Scott--Vogelius element on Alfeld splits as shown in Figure~\ref{fig:scott-zhang-pou}.
By~\citet[Section 4.6]{qin1994} and~\citet{zhang2004} the \review{finite} element \review{pair} is inf-sup stable for $k=d$ in both two and three dimensions.

We begin by considering the two dimensional case.
The covering $\{\Omega_i\}$ is shown as the blue shaded region in the figure and the operator $\tilde I_h$ can be chosen to be the standard Fortin operator for the $\PtwoPzero$ finite element pair,
constructed as follows.
Let $I_1$ be a Scott--Zhang interpolant based on the integration domains shown in red in Figure~\ref{fig:scott-zhang-pou}. First, note that $I_1(\vb_h) = \vb_h$ for all $\vb_h \in V_h$, and
that by construction $I_1(\vb) \in V_i$ for all $\vb$ with support in $\Omega_i$. Let $I_2: V \to V_h$ be defined to be zero on all degrees of freedom except the degrees of freedom on the macro edges,
which instead are chosen so that
\begin{equation}\label{eqn:flux-edge}
\int_E I_2(\vb) \dr s = \int_E \vb \dr s,
\end{equation}
for all edges $E$ in the macro mesh. Then let $\tilde I_h(\vb) = I_1(\vb) + I_2(\vb - I_1(\vb))$. By construction,
\begin{equation}
(\nabla \cdot I_2(\vb), \tilde q_h) = (\nabla \cdot \vb, \tilde q_h) \ \text{ for all } \tilde q_h \in \tilde Q_h, \vb \in V,
\end{equation}
and hence the same property holds for $\tilde I_h$. The existence of a global Fortin operator with the required locality properties then
follows from Lemma~\ref{lem:glue-fortin}.
Comparing Figures~\ref{fig:alfeld-stars} and~\ref{fig:scott-zhang-pou}
we note that the subspaces obtained here are larger but that fewer are
obtained, since we only obtain one subspace per vertex in the mesh
prior to barycentric refinement.
\begin{figure}[htbp]
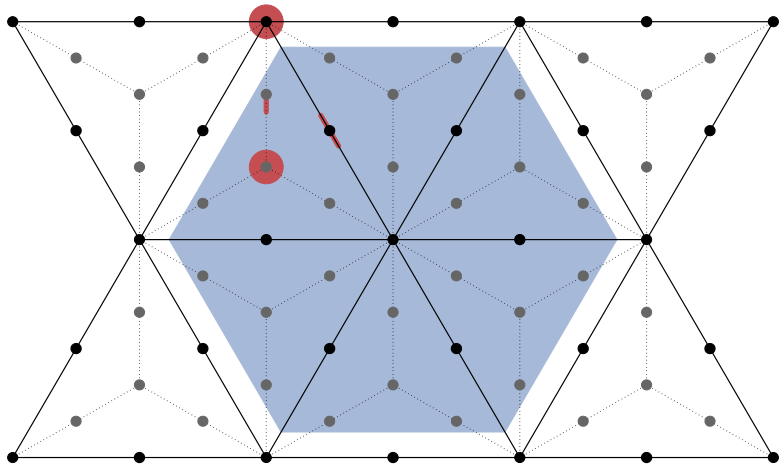

    \begin{center}
        \includestandalone[width=0.7\textwidth]{./figures/sv-scott-zhang}
    \end{center}
    \caption{Domain $\Omega_i$ around a vertex $v_i$ (blue), covering all degrees of freedom inside $\MacroStar(v_i)$. The figure also shows integration regions (in red) for vertex and edge degrees of freedom used in the Scott--Zhang operator $I_1$. Note that the integration regions are chosen so that only those associated with $\MacroStar(v_i)$ intersect with $\Omega_i$.}\label{fig:scott-zhang-pou}
\end{figure}

Essentially the same construction can be used in three dimensions.
$I_1$ is again defined as a Scott--Zhang interpolant, $I_2$ is set to be zero on all vertex and edge degrees of freedom, and the value on each facet $F$ is chosen so that
\begin{equation}\label{eqn:flux-facet}
    \int_F I_2(\vb) \dr x = \int_F \vb \dr x \quad \text{ for all } \vb \in V.
\end{equation}

Finally we note that a similar statement holds for the second example of the Powell--Sabin split, shown in Figure~\ref{fig:powell-sabin}.
\review{Due to the presence of singular vertices/edges, one has to consider a slightly smaller pressure space $Q_h = \{\nabla \cdot \vb_h : \vb_h \in V_h\}$, but then} the element pair $V_h\times Q_h$ is inf-sup stable for $k=d-1$~\citep{zhang_p1_2008,zhang_quadratic_2011, guzman_exact_2020}.
We highlight that the Powell--Sabin split introduces vertices on the macro edges in 2D and on the macro facets in 3D.
The degrees of freedom on these vertices are used in the construction of $I_2$ so that \eqref{eqn:flux-edge} holds in 2D and \eqref{eqn:flux-facet} holds in 3D respectively.
\review{Finally,  we note that to implement the smoother one does not need an explicit description of $Q_h$, but only of $V_h$, so the more complicated nature of $Q_h$ for these splits does not raise any practical issues.}
\begin{remark}
    \review{
    When compared using elements of the same degree, Alfeld splits are more efficient, as the $\MacroStar(v_i)$ for that split contains fewer degrees of freedom and hence the local solves are cheaper.
    However, Powell--Sabin splits enable the use of lower order discretisations.
    The lower degree offered by Powell--Sabin splits could make them more attractive when additional singular terms are present in the equations that are captured by the support of $C^1$ elements, such as
    when interior penalty stabilisation is used for the Navier--Stokes equations; see~\citet[Remark 4.1]{FMSW21} for an example.
}
\end{remark}
\begin{figure}[htbp]
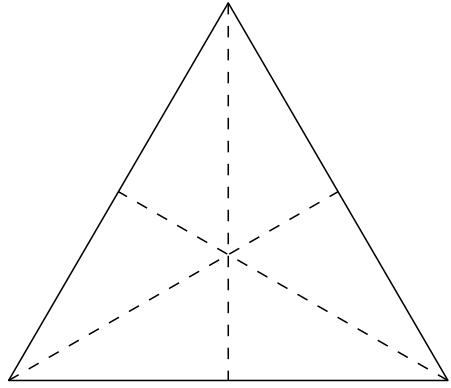

    \begin{center}
        \includestandalone[width=0.4\textwidth]{./figures/powell-sabin}
    \end{center}
    \caption{Powell--Sabin split in two dimensions.}\label{fig:powell-sabin}
\end{figure}

\begin{proof}[Proof of Lemma \ref{lem:glue-fortin}]
    The idea is to combine the global Fortin operator $\tilde I_h$ that preserves the discrete divergence with respect to pressures that are constant on macro cells with suitable local Fortin operators.


    To treat the divergence with respect to the remaining pressures in $Q_h\setminus \tilde Q_h$ we now consider the macro elements separately.
    For each such macro triangle $K$ we define the spaces $V_{h, 0}(K) = \{\vb_h\vert_K:\vb_h \in V_h,\ \mathrm{supp}(\vb_h)\subset K\}$ and $Q_h(K)=\{q_h\vert_K: q_h \in \tilde Q_h^\perp\}$.
    The space $V_{h, 0}(K)$ consists of velocity fields supported in a macro cell and the space $Q_h(K)$ consists of pressures on a macro cell that integrate to zero.
    We note that $Q_h = \sum_{K\in \mesh_h} Q_h(K) \oplus \tilde Q_h $.
    Since we assume that this pair is inf-sup stable for each macro element, we know that there exist Fortin operators $I_h^K: V(K)\to V_{h,0}(K)$, where $V(K) = \{\vb\vert_K: \vb\in V\}$, such that
    \begin{itemize}
        \item $I_h^K(\vb_h)=\vb_h$ for $\vb_h\in V_{h,0}(K)$
        \item $I_h^K$ is bounded as a map $V(K)\mapsto V_{h, 0}(K)$
        \item $(q_h, \nabla\cdot\vb) = (q_h, \nabla\cdot(I_h^K(\vb)))$ for all $q_h \in Q_h(K)$
    \end{itemize}
    for all $K$ (\citet{fortin_analysis_1977},~\citet[Lemma 4.19]{ern2004}).
    
    Now we define
    \begin{equation}
        I_h(\vb) = \tilde I_h(\vb) + \sum_K I_h^K( (\vb-\tilde I_h(\vb))\vert_K).
    \end{equation}
    Clearly, $I_h$ is linear and $I_h(\vb_h)=\vb_h$ for all $\vb_h \in V_h$.
    In addition, $I_h$ is continuous with continuity constant only dependent on the continuity constant of $\tilde I_h$ and the local Fortin operators, and $I_h$ satisfies the locality property in~\eqref{eqn:fortin-locality}. 

    Furthermore, we note that the discrete divergence of vector fields in $V_{h,0}(K)$ with respect to $\tilde Q_h$ is zero.
    It follows that
    \begin{equation}
        \begin{aligned}
            & (q_h, \nabla\cdot(I_h(\vb)))\\
        = & \xunderbrace{(q_h, \nabla\cdot(\tilde I_h(\vb)))}_{=(q_h, \nabla\cdot\vb)} + \smash{\sum_K \xunderbrace{(q_h, \nabla \cdot (\xoverbrace{I_h^K( (\vb-\tilde I_h(\vb))\vert_K)}^{\in V_{h,0}(K)}))}_{=0}} \\
            = & (q_h, \nabla\cdot\vb)
        \end{aligned}
    \end{equation}
    for all $q_h \in \tilde Q_h$ and $ \vb\in V$.

    Lastly, we show that $I_h$ preserves the discrete divergence with respect to the local pressures in $Q_h(K)$.
    For $\vb\in V$, $K\in \mesh_h$, and $q_h \in Q_h(K)$, we have
    \begin{equation}
        \begin{aligned}
            &(q_h, \nabla\cdot(I_h(\vb)))\\
            ={}&(q_h, \nabla\cdot(\tilde I_h(\vb))) + (q_h, \nabla\cdot(I_h^K((\vb-\tilde I_h\vb)\vert_K)))\\
            ={}&(q_h, \nabla\cdot(\tilde I_h(\vb))) + (q_h, \nabla\cdot(\vb-\tilde I_h(\vb)))\\
            ={}&(q_h, \nabla\cdot\vb),
        \end{aligned}
    \end{equation}
    as desired.
\end{proof}

\section{Prolongation}\label{sec:prolongation}

The second key ingredient for a robust multigrid scheme is a robust prolongation operator \review{$\tilde P_H$} that
maps coarse grid functions to fine grid functions with a continuity constant independent of $\gamma$.
To build intuition for this requirement, \review{let $P_H$ be a given prolongation operator and} calculate
\begin{equation}
    \begin{aligned}
        \|\ub_H \|_{A_\Hg}^2 &= \| \ub_H\|_{A_H}^2 + \gamma \|\nabla \cdot \ub_H\|_{0}^2\\
        \|P_H \ub_H \|_{A_\hg}^2 &= \|P_H \ub_H\|_{A_h}^2 + \gamma \|\nabla \cdot ( P_H\ub_H)\|_{0}^2.
    \end{aligned}
\end{equation}
The key difficulty lies in the second term of this norm.
If $V_H \not\subset V_h$, interpolation is not exact. Hence
for a divergence-free vector field $\ub_H \in \mathcal{N}_H$ the second term in
$\|\ub_H\|_{A_\Hg}^2$ vanishes, but it may not hold that $P_H \ub_H$ is divergence-free,
and so the corresponding term in $\|P_H\ub_H\|_{A_\hg}^2$ might be
large. Hence the continuity constant of the prolongation operator $P_H$ in the energy norm is not independent
of $\gamma$. 

Some macro structures cause non-nestedness, and some do not.
For example, Alfeld splits induce a non-nested mesh
structure, as seen in Figure \ref{fig:macro-hierarchy}.
\review{On the other hand, in two dimensions and on regular meshes, the
additional nodes induced by regular refinement are a subset of those induced by
the Powell--Sabin split and the resulting hierarchy is nested, again demonstrated in Figure~\ref{fig:macro-hierarchy}.}
The work of
\citet{lee2009} considered uniform refinements,
which are nested, and hence they do not require any
modification of the prolongation operator. For the remainder of
this section we consider macro meshes that induce non-nested
hierarchies, such as in the case of Alfeld splits.

\begin{figure}
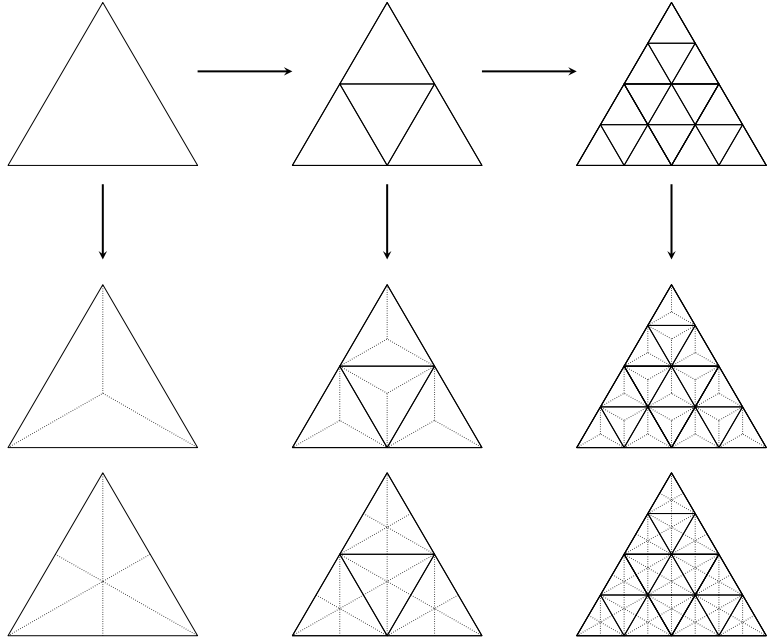

  \begin{center}
      \includestandalone[width=0.7\textwidth]{./figures/macro-hierarchy}
    \caption{A three level multigrid hierarchy \review{using either Alfeld splits or Powell--Sabin splits at each level}.}\label{fig:macro-hierarchy}
    \end{center}
\end{figure}

To remedy the $\gamma$-dependence of the standard
prolongation operator, we must modify it to map fields that are
divergence-free on the coarse grid to fields that are (nearly)
divergence-free on the fine grid, i.e.~that $\|\nabla \cdot ( \tilde P_H\ub_H)\|_{0}^2$ is of order $\mathcal{O}(\gamma^{-1})$.
We now describe a modification of the standard prolongation operator that satisfies this condition.
This type of modification goes back to Sch\"oberl's work, although we give a different derivation and proof.

Let $\ub_H \in \Nc_H$ be a divergence-free function on the coarse-grid and denote the standard prolongation induced by the interpolation operator on the finite element space by $P_H \ub_H$.
We are interested in finding a small perturbation $\tilde \ub_h$ such that \review{$\tilde P_H u_H = P_H \ub_H - \tilde \ub_h\in \Nc_h$}.
This could be achieved by solving
\begin{equation}
    \begin{aligned}
        &\min_{\tilde \ub_h \in V_h} && a(\tilde \ub_h, \tilde \ub_h)\\
        &\text{s.t.} && \Pi_{Q_h} (\nabla\cdot \tilde \ub_h) = \Pi_{Q_h} (\nabla\cdot P_H \ub_H).
    \end{aligned}
\end{equation}
This corresponds to solving a Stokes-like problem in $V_h\times Q_h$.
We now relax this problem in two ways.
First, we do not need to enforce that $P_H\ub_H - \tilde
\ub_h$ has zero divergence, as it is enough if it is suitably small, i.e.~we can instead find $\tilde \ub_h\in V_h$ that minimises
\begin{equation}
    \min_{\tilde \ub_h \in V_h} a(\tilde \ub_h, \tilde \ub_h) + \gamma \|\Pi_{Q_h} (\nabla\cdot(P_H\ub_H - \tilde \ub_h))\|_0^2.
\end{equation}
This corresponds to: find $\tilde \ub_h \in V_h$ such that
\begin{equation}\label{eqn:prolongation-global-problem}
    a_{\hg}(\tilde \ub_h, \vb_h) = \gamma (\Pi_{Q_h} (\nabla\cdot P_H \ub_H), \Pi_{Q_h} (\nabla\cdot\vb_h)) \quad \text{for all } \vb_h \in V_h.
\end{equation}
Clearly at this stage we have not gained much, since we now need to solve a global problem involving the nearly singular bilinear form $a_\hg$. (Recall that the projection onto $Q_h$ of the divergence is the identity, as $\nabla \cdot V_h = Q_h$.)
However, it turns out that under certain assumptions to be stated in the following proposition, one can instead solve the same problem on smaller spaces $\hat V_h\subset V_h$ and $\hat Q_h\subset Q_h$:
\begin{equation}
    \min_{\tilde \ub_h \in \hat V_h} a(\tilde \ub_h, \tilde \ub_h) + \gamma \|\Pi_{\hat Q_h} (\nabla\cdot (P_H\ub_H - \tilde \ub_h))\|_0^2,
\end{equation}
or equivalently: find $\tilde \ub_h\in \hat V_h$ such that
\begin{equation}\label{eqn:prolongation-local-problem-tildeqh}
    a_{\hg}(\tilde \ub_h, \hat \vb_h) = \gamma (\Pi_{\hat Q_h} (\nabla\cdot (P_H \ub_H)), \Pi_{\hat Q_h} (\nabla\cdot \hat \vb_h)) \quad \text{for all } \hat \vb_h \in \hat V_h.
\end{equation}
How exactly one chooses these subspaces will depend on the discretisation under consideration and we will again illustrate this for the Scott--Vogelius element on Alfeld splits.

\begin{proposition}[Robust prolongation]\label{prop:robust-prolongation}
    Assume we can split $Q_h = \tilde Q_H \oplus \hat Q_h$ and that $\tilde Q_H \subseteq Q_H$.
    Let $P_H: V_H\to V_h$ be a prolongation operator that is continuous in the $\|\cdot \|_1$ norm and preserves the divergence with respect to $\tilde Q_H$, i.e.
    \begin{equation}\label{eqn:default-prolongation-divergence-preserving}
       (\nabla\cdot(P_H \vb_H), \tilde q_H) = (\nabla\cdot \vb_H, \tilde q_H) \qquad \text{for all } \tilde q_H\in \tilde Q_H, \vb_H \in V_H.
    \end{equation}
    Assume in addition that there exists a $\hat V_h\subset V_h$ such that 
    \begin{equation}\label{eqn:pertubation-vanishes}
       (\nabla\cdot\hat \vb_h, \tilde q_H) = 0 \qquad \text{for all } \tilde q_H\in \tilde Q_H, \hat \vb_h \in \hat V_h,
    \end{equation}
    and such that the pairing $\hat V_h\times \hat Q_h$ is inf-sup stable,~i.e.
    \begin{equation}
        \adjustlimits \inf_{\hat q_h\in \hat Q_h} \sup_{\hat \vb_h\in \hat V_h} \frac{(\hat q_h, \nabla\cdot \hat \vb_h)}{\|\hat \vb_h\|_1 \|\hat q_h\|_0} \ge c
    \end{equation}
    for some mesh independent $c>0$.
    For $\ub_H \in V_H$, define $\tilde \ub_h$ as the solution to
    \begin{equation}\label{eqn:prolongation-local-problem}
        a_{\hg}(\tilde \ub_h, \hat \vb_h) = \gamma (\Pi_{Q_h} (\nabla\cdot (P_H\ub_H)), (\Pi_{Q_h} (\nabla\cdot (\hat \vb_h)))) \qquad \text{for all }\hat \vb_h\in \hat V_h.
    \end{equation}
    Then the prolongation $\tilde P_H:V_H\to V_h$ defined by
    \begin{equation}
        \tilde P_H \ub_H = P_H \ub_H - \tilde \ub_h
    \end{equation}
    is continuous in the energy norm with continuity constant independent of $\gamma$.
\end{proposition}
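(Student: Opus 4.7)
The plan is to bound separately the two contributions in $\|\tilde P_H \ub_H\|_{A_\hg}^2 = a(\tilde P_H \ub_H, \tilde P_H \ub_H) + \gamma \|\nabla \cdot \tilde P_H \ub_H\|_0^2$ by a $\gamma$-independent multiple of $\|\ub_H\|_{A_\Hg}^2$. For the divergence term I would split along the $L^2$-orthogonal decomposition $Q_h = \tilde Q_H \oplus \hat Q_h$. The $\tilde Q_H$ piece is easy: using the divergence preservation of $P_H$ against $\tilde Q_H$ from~\eqref{eqn:default-prolongation-divergence-preserving}, together with~\eqref{eqn:pertubation-vanishes} which forces $\nabla\cdot \hat V_h \subseteq \hat Q_h$, one obtains $\Pi_{\tilde Q_H}(\nabla\cdot\tilde P_H\ub_H) = \Pi_{\tilde Q_H}(\nabla\cdot \ub_H)$, so $\gamma\|\Pi_{\tilde Q_H}(\nabla\cdot\tilde P_H\ub_H)\|_0^2 \le \gamma\|\nabla\cdot\ub_H\|_0^2 \le \|\ub_H\|_{A_\Hg}^2$.

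The central step will be to establish the $\gamma$-independent bound $\|\tilde \ub_h\|_a \preceq \|\ub_H\|_1$. Testing~\eqref{eqn:prolongation-local-problem} with any $\hat \vb_h \in \hat V_h \cap \ker(\nabla\cdot)$ gives $a(\tilde \ub_h, \hat \vb_h) = 0$, so $\tilde \ub_h$ is the $a$-minimiser among all elements of $\hat V_h$ with the same discrete divergence. A Cauchy--Schwarz/Young estimate obtained by testing~\eqref{eqn:prolongation-local-problem} with $\tilde \ub_h$ itself yields $\|\nabla\cdot \tilde \ub_h\|_0 \le \|\Pi_{\hat Q_h}(\nabla\cdot P_H \ub_H)\|_0$. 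The inf-sup stability of $\hat V_h \times \hat Q_h$ supplies a bounded right inverse of the divergence, which combined with the minimality property gives $\|\tilde \ub_h\|_a \preceq \|\nabla\cdot \tilde \ub_h\|_0 \preceq \|P_H\ub_H\|_1 \preceq \|\ub_H\|_1$, where the last step uses the $H^1$-continuity of $P_H$. This immediately controls the energy piece via $a(\tilde P_H\ub_H,\tilde P_H\ub_H) \preceq a(P_H\ub_H, P_H\ub_H) + a(\tilde \ub_h, \tilde \ub_h) \preceq \|\ub_H\|_1^2 \preceq a(\ub_H,\ub_H)$.

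For the remaining term $\gamma\|\Pi_{\hat Q_h}(\nabla\cdot\tilde P_H\ub_H)\|_0^2$ I would rewrite~\eqref{eqn:prolongation-local-problem} in the saddle-point form $(\gamma\Pi_{\hat Q_h}(\nabla\cdot \tilde P_H \ub_H), \nabla\cdot \hat\vb_h) = a(\tilde \ub_h, \hat\vb_h)$ for all $\hat\vb_h \in \hat V_h$, so that $\gamma \Pi_{\hat Q_h}(\nabla\cdot \tilde P_H \ub_H) \in \hat Q_h$ plays the role of a Lagrange multiplier. Inf-sup stability then gives $\gamma\|\Pi_{\hat Q_h}(\nabla\cdot\tilde P_H\ub_H)\|_0 \preceq \|\tilde \ub_h\|_a \preceq \|\ub_H\|_1$, and multiplying this with the trivial bound $\|\Pi_{\hat Q_h}(\nabla\cdot\tilde P_H\ub_H)\|_0 \le \|\tilde P_H\ub_H\|_1 \preceq \|\ub_H\|_1$ yields $\gamma\|\Pi_{\hat Q_h}(\nabla\cdot\tilde P_H\ub_H)\|_0^2 \preceq \|\ub_H\|_1^2$, closing the estimate since Korn's inequality gives $\|\ub_H\|_1^2 \preceq \|\ub_H\|_{A_\Hg}^2$.

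The main obstacle will be the $\gamma$-independent control of $\|\tilde \ub_h\|_a$: testing~\eqref{eqn:prolongation-local-problem} with $\tilde \ub_h$ directly only yields $\|\tilde \ub_h\|_a = O(\sqrt\gamma)$, which is too weak. The key observation is that $\tilde \ub_h$ is $a$-orthogonal to the divergence-free subspace of $\hat V_h$ and is therefore the $a$-smallest element in its divergence coset, which upgrades the naive bound to the sharp $\gamma$-independent one via inf-sup stability.
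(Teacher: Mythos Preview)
Your argument is correct, and it reaches the same conclusion as the paper, but the route is organised differently. The paper's proof is shorter because it packages both estimates you need into a single variational comparison: it views $\tilde\ub_h$ as the minimiser over $\hat V_h$ of
\[
J(\tilde\vb_h) \defeq a(\tilde\vb_h,\tilde\vb_h) + \gamma\|\Pi_{\hat Q_h}(\nabla\cdot(P_H\ub_H - \tilde\vb_h))\|_0^2,
\]
and compares $J(\tilde\ub_h)$ against $J(\bar\ub_h)$ where $\bar\ub_h = I(P_H\ub_H)$ is the Fortin image of $P_H\ub_H$. Since $\bar\ub_h$ matches the $\hat Q_h$-divergence of $P_H\ub_H$ exactly, the second term of $J(\bar\ub_h)$ vanishes and $J(\tilde\ub_h) \le J(\bar\ub_h) = a(\bar\ub_h,\bar\ub_h) \preceq \|\ub_H\|_1^2$. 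This single inequality simultaneously bounds $\|\tilde\ub_h\|_a^2$ and $\gamma\|\Pi_{\hat Q_h}(\nabla\cdot\tilde P_H\ub_H)\|_0^2$, so no separate Lagrange-multiplier step is needed.

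Your approach instead splits these two pieces and handles them by different mechanisms: $a$-minimality on the divergence coset plus a right inverse from inf-sup for $\|\tilde\ub_h\|_a$, and then the inf-sup condition again, now applied to $\gamma\Pi_{\hat Q_h}(\nabla\cdot\tilde P_H\ub_H)$ as a multiplier, for the second piece. This is the classical saddle-point viewpoint on the same penalty problem, and it is equally valid; it just takes a couple more steps. Both proofs treat the $\tilde Q_H$-component identically via~\eqref{eqn:default-prolongation-divergence-preserving} and~\eqref{eqn:pertubation-vanishes}.
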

\begin{remark}
    The problems in~\eqref{eqn:prolongation-local-problem-tildeqh} and~\eqref{eqn:prolongation-local-problem} are equivalent by the assumption in~\eqref{eqn:pertubation-vanishes}.
\end{remark}
\begin{remark}
This prolongation operator is very similar to the one used by~\citet[Theorem 4.2]{schoberl1999b} and~\citet[Lemma 5.1]{benzi2006}.
The difference is that in Sch\"oberl's work the problem in~\eqref{eqn:prolongation-local-problem} is replaced with
\begin{equation}\label{eqn:prolongation-local-problem-schoeberl}
    a_{\hg}(\tilde \ub_h, \hat v_h) = a_\hg(P_H \ub_H, \hat \vb_h) \qquad \text{for all }\hat \vb_h\in \hat V_h.
\end{equation}
\review{We note that this problem only differs in the right-hand side, and hence the two methods have essentially the same cost.}
We use the version in~\eqref{eqn:prolongation-local-problem} for two reasons: first, it reduces to the standard prolongation for $\gamma=0$, and second, it can be viewed as a local version of the global problem in~\eqref{eqn:prolongation-global-problem}, whereas solving a global version of~\eqref{eqn:prolongation-local-problem-schoeberl} would lead to $\tilde \ub_h = P_H \ub_H$ and hence $\tilde P_H \ub_H = 0$.

The proofs of Sch\"oberl and Benzi \& Olshanskii are based on an equivalent mixed problem.
We will give a different proof motivated by the formulation as an optimisation problem and use the existence of a Fortin operator arising from inf-sup stability.
\end{remark}

Before giving the proof, we again consider the case of the Scott--Vogelius element on Alfeld splits.
In this case, the hierarchy is constructed as shown in Figure~\ref{fig:macro-hierarchy}.
Due to the barycentric refinement at each level, we do not have nested function spaces and hence the prolongation is not exact and a divergence-free function on the coarse grid may be prolonged to a function on the fine grid with nonzero divergence.
However, we observe that interpolation is exact on the boundaries of coarse grid macro cells.
This means that flux across these boundaries is preserved and hence 
the divergence with respect to functions in
\begin{equation}
    \tilde Q_H \coloneqq \{q \in L^2: q\equiv \mathrm{const} \text{ on coarse grid macro cells } K\in \mesh_H\},
\end{equation}
is preserved, i.e.~we have for the standard interpolation operator
$P_H: V_H \to V_h$ that
\begin{equation}
    (\nabla \cdot \ub_H, \tilde q_H) = (\nabla\cdot (P_H \ub_H), \tilde q_H) \quad \text{for all } \ub_H\in V_H,\ \tilde q_H \in \tilde Q_H.
\end{equation}
Hence requirement~\eqref{eqn:default-prolongation-divergence-preserving} of Proposition~\ref{prop:robust-prolongation} is satisfied.
To correct for any extra divergence gained due to interpolation inside a coarse grid macro cell we will solve local problems.
To this end, we define the spaces
\begin{equation}
    \begin{aligned}
        \hat Q_h &\defeq \{q_h\in Q_h : \Pi_{\tilde Q_H} q_h = 0\}\\
        \hat V_h &\defeq \{\vb_h\in V_h : \supp(\vb_h) \subset K \text{ for some } K\in \mesh_H\}.
    \end{aligned}
\end{equation}
The space $\hat V_h$ consists of local patches of velocity degrees of freedom contained in coarse grid macro cells, as shown in Figure~\ref{fig:sv-prolongation}.
We highlight that these patches decouple and hence solves involving $\hat V_h$ can be performed independently, leading to good performance.
\begin{figure}[htpb]
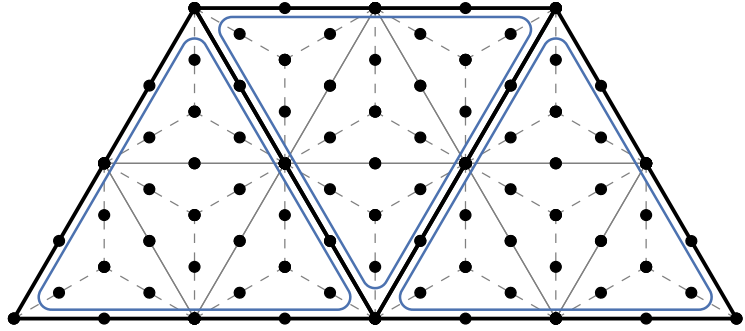

    \begin{center}
        \includestandalone[width=10cm]{./figures/baryprolongation}
    \end{center}
    \caption{Degrees of freedom of the $\hat V_h$ on which we perform local solves to obtain a robust prolongation in two dimensions.}\label{fig:sv-prolongation}
\end{figure}
The pair $\hat V_h\times \hat Q_h$ is inf-sup stable in both two and three dimensions, so it remains to check
\begin{equation}
    (\nabla\cdot \hat \vb_h, \tilde q_H) = 0 \quad \text{for all }\tilde q_H\in \tilde Q_H, \hat \vb_h\in \hat V_h,
\end{equation}
which follows from the requirement that the support of vector fields $\hat \vb_h\in \hat V_h$ is contained in coarse grid macro cells and the definition of $\tilde Q_H$.
A robust prolongation operator can now be constructed as in Proposition~\ref{prop:robust-prolongation}.

\begin{proof}[Proof of Proposition~\ref{prop:robust-prolongation}]
    We denote
    \begin{equation}
        J(\tilde \vb_h) \defeq a(\tilde \vb_h, \tilde \vb_h)  + \gamma \|\Pi_{\hat Q_h} (\nabla\cdot(P_H\ub_H-\tilde \vb_h))\|_0^2,
    \end{equation}
    and observe that $\tilde \ub_h$ is the unique minimiser of $J$ in $\hat V_h$.
    By inf-sup stability of the pairing $\hat V_h\times\hat Q_h$ there exists a continuous Fortin operator $I:V\to \hat V_h$ that satisfies
    \begin{equation}
        \Pi_{\hat Q_h} (\nabla\cdot (I\vb)) = \Pi_{\hat Q_h} (\nabla\cdot \vb) \quad \text{for all }\vb \in V.
    \end{equation}
    Let $\bar \ub_h \defeq I(P_H\ub_H)\in \hat V_h$ and observe that
    \begin{equation}\label{eqn:jtildebound}
        J(\tilde \ub_h) \le J(\bar \ub_h) = a (\bar \ub_h, \bar \ub_h) \preceq \|P_H\ub_H\|_1^2 \preceq \|\ub_H\|_1^2.
    \end{equation}
    We conclude
    \begin{equation}
        \begin{aligned}
&\|\tilde P_H\ub_H \|_{A_\hg}^2 \\
            \le& \|P_H \ub_H-\tilde \ub_h \|_{A_h}^2 + \gamma \|\Pi_{Q_h} (\nabla\cdot (P_H\ub_H -\tilde \ub_h))\|_0^2\\
            \preceq& \xunderbrace{\|P_H \ub_H\|_{A_h}^2}_{\preceq \|\ub_H\|_{A_h}^2} + \xunderbrace{\|\tilde \ub_h\|_{A_h}^2 + \gamma \|\Pi_{\hat Q_h} (\nabla\cdot(P_H\ub_H -\tilde \ub_h))\|_0^2}_{=J(\tilde \ub_h)\underset{\eqref{eqn:jtildebound}}{\preceq} \|\ub_H\|_1^2} + \gamma \|\Pi_{\tilde Q_H}(\nabla\cdot(P_H \ub_H - \tilde \ub_h))\|_0^2\\
            \preceq& \|\ub_H\|_1^2 + \gamma \|\xunderbrace{\Pi_{\tilde Q_H}(\nabla\cdot(P_H \ub_H))}_{\underset{\eqref{eqn:default-prolongation-divergence-preserving}}=\Pi_{\tilde Q_H} (\nabla\cdot \ub_H)}\|_0^2 +\gamma \|\xunderbrace{\Pi_{\tilde Q_H} (\nabla\cdot \tilde \ub_h)}_{\underset{\eqref{eqn:pertubation-vanishes}}=0}\|_0^2\\
            \preceq& \|\ub_H\|_1^2 + \gamma \|\Pi_{\tilde Q_H} (\nabla\cdot \ub_H)\|_0^2\\
                                          \preceq& \|\ub_H\|_{A_\Hg}^2.
        \end{aligned}
    \end{equation}
\end{proof}

\section{Multigrid convergence}\label{sec:mgrid}
We have seen that the bounds in the estimates for the relaxation behave like $c_1\sim h^{-2}$ and $c_2\sim h^{-4}$.
The reason for the fast growth of $c_2$ is that the splitting into divergence-free functions is constructed by expressing $\ub_0 =\nabla\times\Phib$ for a potential $\Phib \in H^2$,
introducing second derivatives into the estimates.
This quartic growth of $c_2$ complicates a standard multigrid analysis, as one needs to show that a coarse-grid solve reduces the error in such a way that the multigrid scheme yields mesh-independent convergence.
Following the structure of the proof by Sch\"oberl for the \PtwoPzero element~\cite[\S 4.4, \S 4.6.1]{schoberl1999b}, with suitable modifications, one can obtain a parameter-robust multigrid convergence result.
\begin{theorem}
Define a multigrid method using the relaxation defined in Section
\ref{sec:smoothing}, the prolongation operator defined in Section
\ref{sec:prolongation}, and the adjoint of the prolongation operator as
restriction operator. Then a W-cycle scheme with sufficiently many
smoothing steps leads to a solver with convergence independent of the
number of levels in the multigrid hierarchy and the parameter $\gamma$.
\end{theorem}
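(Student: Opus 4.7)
The plan is to follow the standard framework of parameter-robust multigrid analysis, as set up for the $\PtwoPzero$ element in \citet[\S 4.4 and \S 4.6.1]{schoberl1999b}, and verify that its two main hypotheses --- a smoothing property and an approximation property --- are satisfied by the ingredients developed in Sections~\ref{sec:smoothing} and~\ref{sec:prolongation}. Throughout, I would work with the energy norm $\|\cdot\|_{A_\hg}$ on each level and the auxiliary norm $\|\cdot\|_{D_\hg}$ induced by the additive Schwarz preconditioner, so that the smoothing and approximation properties couple via the spectral bounds in Proposition~\ref{prop:kernel-decomposition-theorem}.

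First, I would establish the \emph{smoothing property}: after $m$ steps of a symmetrised version of the relaxation, the fine-grid error $e^{(m)}$ satisfies an estimate of the form $\|e^{(m)}\|_{D_\hg} \preceq m^{-1/2}\, \|e^{(0)}\|_{A_\hg}$, with constants independent of $\gamma$ and $h$. This follows from the upper bound $A_\hg \le N_O D_\hg$, which is parameter- and mesh-independent on shape-regular meshes, combined with a standard spectral argument for damped Richardson (or multiplicative Schwarz) iteration applied to a symmetric operator. Next, I would prove the \emph{approximation property}, which is the genuinely subtle ingredient: with $\tilde P_H$ the robust prolongation of Proposition~\ref{prop:robust-prolongation} and its adjoint as restriction, one needs an estimate of the form
\begin{equation*}
    \| (A_\hg^{-1} - \tilde P_H A_\Hg^{-1} \tilde P_H^*) \fb\|_{A_\hg}^2
    \;\preceq\; \kappa\, \| \fb\|_{D_\hg^{-1}}^2
    \qquad \text{for all } \fb\in V_h^*,
\end{equation*}
for a constant $\kappa$ independent of $\gamma$ and of the number of levels. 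Following Sch\"oberl, this is reduced to a two-level bound which splits into a coarse-grid best-approximation estimate for $P_H \ub_H$ (using the $\gamma$-independent continuity of $\tilde P_H$ from Proposition~\ref{prop:robust-prolongation}) and a control of the divergence defect introduced by the fine-grid correction $\tilde \ub_h$ (using~\eqref{eqn:default-prolongation-divergence-preserving} and~\eqref{eqn:pertubation-vanishes}).

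The two properties together give the standard two-grid contraction $\|I - M^{-1}A_\hg\|_{A_\hg} \le \rho(m)$ with $\rho(m)\to 0$ as $m\to\infty$ and $\rho(m)$ independent of $h$ and $\gamma$. To pass to a multi-level statement I would use the Hackbusch perturbation argument for W-cycles: if $E_k$ denotes the iteration operator on level $k$ and $\rho_k \defeq \|E_k\|_{A_{\hg_k}}$, then one obtains a recursion of the form $\rho_k \le \rho(m) + C\, \rho_{k-1}^2$, and by induction $\rho_k \le 2\rho(m)$ provided $\rho(m)$ is small enough, i.e.\ provided sufficiently many smoothing steps are performed. The quadratic term, which is the characteristic feature of the W-cycle, is essential here because the quartic growth $c_2(h)\sim h^{-4}$ of the kernel-decomposition constant in Proposition~\ref{prop:kernel-decomposition-theorem} precludes the sharper V-cycle analysis.

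The main obstacle is the approximation property, and within it the need to control the term $\gamma \|\Pi_{Q_h}\nabla\cdot(\tilde P_H \ub_H - \ub_H)\|_0^2$ uniformly in $\gamma$ across the non-nested hierarchy induced by Alfeld or Powell--Sabin splits. The key identity $\Pi_{\tilde Q_H}\nabla\cdot(P_H\ub_H) = \Pi_{\tilde Q_H}\nabla\cdot\ub_H$ and the locality~\eqref{eqn:pertubation-vanishes} of the correction $\tilde \ub_h$ restrict the residual divergence to the orthogonal complement $\hat Q_h$, on which the inf-sup stability of $\hat V_h \times \hat Q_h$ produces a Fortin operator that transfers the bound on $J(\tilde \ub_h)$ from Proposition~\ref{prop:robust-prolongation} into a $\gamma$-independent estimate. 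Once this delicate coupling between the macro-element inf-sup stability, the localised Fortin operator of Lemma~\ref{lem:glue-fortin}, and the kernel-decomposition bound is verified, the remainder of the argument is routine adaptation of the framework in \citet[\S 4.4]{schoberl1999b}.
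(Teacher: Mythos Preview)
Your proposal is correct and takes essentially the same approach as the paper: the paper does not supply a detailed proof but simply states, immediately before the theorem, that one follows the structure of \citet[\S 4.4, \S 4.6.1]{schoberl1999b} with suitable modifications, which is precisely the smoothing/approximation/W-cycle recursion framework you outline. Your write-up is in fact more explicit than the paper's own treatment, and the ingredients you invoke (Proposition~\ref{prop:kernel-decomposition-theorem} for the spectral bounds, Proposition~\ref{prop:robust-prolongation} for the $\gamma$-independent continuity of $\tilde P_H$, and the Hackbusch recursion $\rho_k \le \rho(m) + C\rho_{k-1}^2$ to absorb the $h^{-4}$ growth of $c_2$) are the correct ones.
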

\section{Numerical example}\label{sec:numerics}
We conclude with a numerical example that corroborates the multigrid convergence theorem and that demonstrates the necessity of both components of the multigrid scheme.
The example is implemented using the Firedrake~\citep{Rathgeber:2016aa} finite element package, with the local solves performed using the PCPATCH~\citep{farrell2019pcpatch} preconditioner recently included in PETSc~\citep{petsc-user-ref}.

We consider the problem \eqref{eqn:elasticity} on a domain $\Omega = [0, 1]^d$ for a range of parameter
values $\gamma$.
We pick a zero right-hand-side $\fb=0$,
homogeneous Dirichlet conditions on the boundary $\{x_1=0\}$, a constant traction $\hb$ pulling down in $x_2$ direction with magnitude $1/2$ on the boundary $\{x_1=1\}$, and homogeneous Neumann conditions on the remaining boundaries.
The problem is discretised with Scott--Vogelius elements on Alfeld splits of a regular simplicial mesh.
We apply conjugate gradients preconditioned by multigrid W-cycles with
two Chebyshev smoothing iterations per level to solve the problem.
We consider four variants of the algorithm,
considering all combinations of robust and standard components. In
addition, we also compare to the HYPRE BoomerAMG
algorithm~\citep{henson2002} using symmetric SOR smoothing on each
process and additive updates between processes~\citep{baker2011}.
Each solver was terminated when the Euclidean norm of the
residual was reduced by eight orders of magnitude, up to a maximum
of 200 iterations.

Results for the two-dimensional problem with $[\Ptwo]^2$ are given in
Table \ref{tab:svgraddiv2d}, and results for the three-dimensional
problem with $[\Pthree]^3$ are given in Table \ref{tab:svgraddiv3d}.
The algebraic multigrid algorithm exhibits growth of iteration counts as
the mesh is refined.  For all four geometric multigrid variants, mesh
independence is observed, but parameter-robustness is only achieved when
both robust relaxation and robust transfer are employed.

\begin{table}[htbp]
\centering
\resizebox{0.99\textwidth}{!}{
\begin{tabular}{cr|cccccccc}
\toprule
\multirow{2}{*}{Refinements} & \multirow{2}{*}{DoF} & \multicolumn{8}{c}{$\gamma$} \\
                                && $0$	&	$1$	&	$10$	&	$10^2$ 	& $10^3$ &	$10^4$	&	$10^6$	&	$10^8$\\
\midrule
\multicolumn{10}{c}{Robust relaxation \& robust transfer}\\
\midrule
1 &   1\,602 & 9 & 9 & 10 & 14 & 15 & 15 & 15 & 15 \\
2 &   6\,274 & 9 & 9 & 11 & 15 & 15 & 15 & 15 & 15 \\
3 &  24\,834 & 9 & 9 & 11 & 15 & 15 & 15 & 15 & 15 \\
4 &  98\,818 & 8 & 9 & 10 & 15 & 15 & 15 & 15 & 15 \\
5 & 394\,242 & 8 & 8 & 10 & 14 & 15 & 15 & 15 & 15 \\
\midrule
\multicolumn{10}{c}{Robust relaxation \& standard transfer}\\
\midrule
1 &   1\,602 & 9 & 9 & 11 & 20 & 79   &  180 & >200 & >200 \\
2 &   6\,274 & 9 & 9 & 11 & 20 & >200 & >200 & >200 & >200 \\
3 &  24\,834 & 9 & 9 & 11 & 20 & >200 & >200 & >200 & >200 \\
4 &  98\,818 & 8 & 9 & 11 & 20 & >200 & >200 & >200 & >200 \\
5 & 394\,242 & 8 & 9 & 11 & 20 & >200 & >200 & >200 & >200 \\
\midrule
\multicolumn{10}{c}{Jacobi relaxation \& robust transfer}\\
\midrule
1 &   1\,602 & 21 & 23 & 51 & 173 & >200 & >200 & >200 & >200 \\
2 &   6\,274 & 21 & 24 & 53 & 180 & >200 & >200 & >200 & >200 \\
3 &  24\,834 & 21 & 23 & 53 & 181 & >200 & >200 & >200 & >200 \\
4 &  98\,818 & 21 & 23 & 52 & 179 & >200 & >200 & >200 & >200 \\
5 & 394\,242 & 20 & 23 & 52 & 179 & >200 & >200 & >200 & >200 \\
\midrule
\multicolumn{10}{c}{Jacobi relaxation \& standard transfer}\\
\midrule
1 &   1\,602 & 21 & 24 & 48 & >200 & >200 & >200 & >200 & >200 \\
2 &   6\,274 & 21 & 25 & 46 & >200 & >200 & >200 & >200 & >200 \\
3 &  24\,834 & 21 & 24 & 46 & >200 & >200 & >200 & >200 & >200 \\
4 &  98\,818 & 21 & 24 & 46 & >200 & >200 & >200 & >200 & >200 \\
5 & 394\,242 & 20 & 24 & 46 & >200 & >200 & >200 & >200 & >200 \\
\midrule
\multicolumn{10}{c}{Algebraic multigrid}\\
\midrule
1 &   1\,602 & 17 & 19 & 34 &  93 & 180  & >200 & >200 & >200 \\
2 &   6\,274 & 19 & 21 & 37 & 108 & >200 & >200 & >200 & >200 \\
3 &  24\,834 & 20 & 23 & 40 & 120 & >200 & >200 & >200 & >200 \\
4 &  98\,818 & 22 & 25 & 46 & 132 & >200 & >200 & >200 & >200 \\
5 & 394\,242 & 23 & 26 & 49 & 142 & >200 & >200 & >200 & >200 \\
\bottomrule
\end{tabular}
}
\caption{Iteration counts in two dimensions for the $[\Ptwo]^2$ element for five different geometric and algebraic multigrid variants.
The geometric multigrid results are obtained with a $4\times4$ coarse grid.}
\label{tab:svgraddiv2d}
\end{table}

\begin{table}[htbp]
\centering
\resizebox{0.99\textwidth}{!}{
\begin{tabular}{cr|cccccccc}
\toprule
\multirow{2}{*}{Refinements} & \multirow{2}{*}{DoF} & \multicolumn{8}{c}{$\gamma$} \\
                                && $0$	&	$1$	&	$10$	&	$10^2$ 	& $10^3$ &	$10^4$	&	$10^6$	&	$10^8$\\
\midrule
\multicolumn{10}{c}{Robust relaxation \& robust transfer}\\
\midrule
1 &     23\,871 & 17 & 15 & 14 & 18 & 23 & 25 & 26 & 25 \\
2 &    185\,115 & 18 & 16 & 16 & 21 & 26 & 29 & 31 & 30 \\
3 & 1\,458\,867 & 18 & 16 & 16 & 22 & 27 & 31 & 32 & 31 \\
\midrule
\multicolumn{10}{c}{Robust relaxation \& standard transfer} \\
\midrule
1 &     23\,871 & 17 & 20 & 38   &  124 & >200 & >200 & >200 & >200 \\
2 &    185\,115 & 18 & 23 & >200 & >200 & >200 & >200 & >200 & >200 \\
3 & 1\,458\,867 & 18 & 22 & >200 & >200 & >200 & >200 & >200 & >200 \\
\midrule
\multicolumn{10}{c}{Jacobi relaxation \& robust transfer}   \\
\midrule
1 &     23\,871 & 53 & 57 & 114 & >200 & >200 & >200 & >200 & >200 \\
2 &    185\,115 & 60 & 64 & 114 & >200 & >200 & >200 & >200 & >200 \\
3 & 1\,458\,867 & 58 & 64 & 114 & >200 & >200 & >200 & >200 & >200 \\
\midrule
\multicolumn{10}{c}{Jacobi relaxation \& standard transfer}\\
\midrule
1 &     23\,871 & 53 & 66 & 183  & >200 & >200 & >200 & >200 & >200 \\
2 &    185\,115 & 60 & 73 & >200 & >200 & >200 & >200 & >200 & >200 \\
3 & 1\,458\,867 & 58 & 73 & >200 & >200 & >200 & >200 & >200 & >200 \\
\midrule
\multicolumn{10}{c}{Algebraic multigrid}\\
\midrule
1 &     23\,871 & 42 & 45 & 80 & >200 & >200 & >200 & >200 & >200 \\
2 &    185\,115 & 46 & 52 & 90 & >200 & >200 & >200 & >200 & >200 \\
3 & 1\,458\,867 & 51 & 56 & 97 & >200 & >200 & >200 & >200 & >200 \\
\bottomrule
\end{tabular}
}
\caption{Iteration counts in three dimensions for the $[\Pthree]^3$ element for five different geometric and algebraic multigrid variants.
The geometric multigrid results are obtained with a $2\times2\times 2$ coarse grid.}
\label{tab:svgraddiv3d}
\end{table}

\review{To give an idea of relative computational costs, we report runtimes of the different solver components in two and three dimensions
in Table \ref{tab:timing}. To cleanly separate setup and application times, the solver was adjusted to use Richardson instead of Chebyshev
iteration for the relaxation; when configuring the Chebyshev relaxation, PETSc automatically estimates the eigenvalues of
the preconditioned operator, which requires applications of the subspace correction in the overall preconditioner setup. Both
relaxation
and prolongation are of $\mathcal{O}(1)$ complexity in $\gamma$; the cost of relaxation is linear in the number of vertices of the macro mesh on shape regular meshes,
while the cost of prolongation is linear in the number of macro cells.
}

\begin{table}[htbp]
\centering
\begin{tabular}{r|cc|cc|c}
\toprule
\multirow{2}{*}{Dimension} &\multicolumn{2}{c|}{Relaxation}&\multicolumn{2}{c|}{Transfer} & \multirow{2}{*}{Other}\\
& Setup & Application & Setup & Application &\\
\midrule
2 & 19.0 & 15.5 & 6.8 & 7.5 & 4.1\\
3 & 83.6 & 77.0 & 20.7 & 17.0 & 12.8\\
\bottomrule
\end{tabular}
\caption{\review{Runtime (in seconds) of the different solver components for simple two level examples in two and three dimensions.
To simplify timing the code is run in serial and we employ Richardson
iteration on each level. In 2D the fine grid has 221954 dofs, and in 3D
the fine grid has 185115 dofs.}}
\label{tab:timing}
\end{table}

\section{Conclusion}
We have demonstrated that it is possible to construct multigrid methods in both
two and three dimensions that are robust with respect to the parameter $\gamma$
in the nearly incompressible elasticity equations~\eqref{eqn:elasticity}.
This includes situations where the meshes are nonnested, for which there is a
need to impose incompressibility constraints as part of the prolongation
operator.
Our main contribution is to guide the choice of subspace decomposition used by
the smoother without the need for any explicit description of the
divergence-free space, by utilizing local Fortin operators guaranteed by local
inf-sup conditions.
This approach can likely be applied more generally when a local inf-sup condition is known.

\section*{Acknowledgements}
This research is supported by the Engineering and Physical Sciences
Research Council [grant number EP/R029423/1 and EP/V001493/1], by the EPSRC Centre For Doctoral Training in
Industrially Focused Mathematical Modelling [grant number EP/L015803/1] in
collaboration with London Computational Solutions. LM also acknowledges
support from the UK Fluids Network [EPSRC grant number EP/N032861/1] for
funding a visit to Oxford. FW was partially supported by a grant from the Simons Foundation (560651)

\section*{Code availability}
For reproducibility, we cite archives of the exact software versions used to
produce the results in this paper. All major Firedrake components as well as
the code used to obtain the shown iteration counts have been
archived on Zenodo \citep{zenodo/Firedrake-20210702.0}.
An installation of Firedrake with
components matching those used to produce the results in this paper can by
obtained following the instructions at
\url{https://www.firedrakeproject.org/download.html}.

\ifarxiv
\bibliography{fortin-subspace-decomposition.bbl}
\else
\bibliographystyle{IMANUM-BIB}
\bibliography{latex-includes/TEX-Bib}
\fi
\end{document}